\newtheorem{ev}{Everything}[section]
\newtheorem{theorem}[ev]{Theorem}
\newtheorem{lemma}[ev]{Lemma}
\newtheorem{prop}[ev]{Proposition}
\theoremstyle{definition}
\newtheorem{exa}[ev]{Example}
\newtheorem{mydef}[ev]{Definition}
\theoremstyle{remark}
\newtheorem{rema}[ev]{Remark}
\newtheoremstyle{indented}
  {1pt}
  {1pt}
  {\addtolength{\@totalleftmargin}{1 em}
   \addtolength{\linewidth}{-4 em}
   \parshape 1  1.5 em \linewidth}
  {}
  {\bfseries}
  {.}
  {.5em}
  {}
 \theoremstyle{indented}
\newtheorem{ith}[ev]{Theorem} 
\begin{document}

\begin{center}
{\bf DIFFERENTIAL CALCULUS}
\vspace{0,5cm}
{\bf ON JORDAN ALGEBRAS AND JORDAN MODULES}
\vspace{1cm}

Alessandro CAROTENUTO,
Ludwik D\k ABROWSKI
\vspace{0,2cm}

\textit{\small Scuola Internazionale Superiore di Studi Avanzati (SISSA)}\\{\small  via Bonomea 265, I-34136 Trieste, Italy}\\
{\small acaroten@sissa.it}, {\small dabrowski@sissa.it}
\vspace{0,5cm}

Michel DUBOIS-VIOLETTE\\
\vspace{0,2cm}
\textit{\small Laboratoire de Physique Th\'{e}orique, CNRS, Universit\'{e} Paris-Sud,\\ Universit\'{e} Paris-Saclay}, {\small   B\^atiment 210, F-91405 Orsay, France}\\
{\small michel.dubois-violette@u-psud.fr}
 \end{center}

\begin{abstract}
\noindent
Having in mind applications to particle physics we develop the differential calculus over Jordan algebras and the theory of connections on Jordan modules. In particular  we focus on differential calculus over the exceptional Jordan algebra and provide a complete characterization of the theory of connections for free Jordan modules. 
\end{abstract}
\section{Introduction}
It is 
quite legitimate
to expect that the finite spectrum of fundamental particles of matter (fundamental fermions) corresponds to representations of some finite quantum space.
Such a virtual space should be described by its observables, i.e. a quantum analogue of some class of real 
functions over it.
This is of course at the core of noncommutative geometry, where C*-algebras correspond to the noncommutative analogues of algebras of continuous complex functions. This formalism, extended also to real  C*-algebras
and enriched with additional structure, has been in particular applied to the Standard Model (\cite{C1},\cite{C2},\cite{C3},\cite{C4},\cite{CM},\cite{vS}) as well as to other Higgs gauge models (\cite{MDV2},\cite{MDV3},\cite{DVKM}). Moreover quaternionic C*-algebras, seen as generalizations of the algebra of continuous quaternionic functions on a classical space, have been studied within this formalism.
\\However already at the early beginning of quantum theory it was pointed out that the appropriate algebraic structures for finite quantum systems are the finite-dimensional 
formally real Jordan algebras,
 since this is the right framework in which one has spectral theory and  the physical interpretation in terms of observables and states (\cite{JNW},\cite{GPR}).

The real vector space of self-adjoint elements of a C* algebra is a formally real Jordan algebra (it is in fact a JB-algebra which implies the formal reality and is equivalent to it in finite dimension, see e.g. \cite{McCrimmon} or \cite{Iordanescu}).
In fact Jordan subalgebras of this kind of algebras cover almost all the possible cases, with the only exception of the real Albert factor, which is the $27$-dimensional algebra of three by three hermitian matrices with octonionic entries  \cite{Albert}. 
In a recent work \cite{MDV1} it has been 
suggested that this exceptional algebra may play a key role in the description of the internal space of fundamental fermions in the Standard Model of particle physics and in particular, that the implicit triality underlining the exceptional algebra may be related to the three generations of fundamental fermions.

The aim of this work is to 
outset the representation theory of formally real (also called 
Euclidean) finite-dimensional
 finite-dimensional Jordan algebras. We investigate Jordan modules over Jordan algebras and elaborate differential calculus and the theory of connections on Jordan modules. From a physical point of view, this corresponds to develop gauge theories for a quantum theory in which one allows for Jordan algebras as algebras of observables. It is needless to say that the groups of automorphisms of the Jordan algebras play a fundamental role in this theory 
(for instance acting as gauge group). We also provide certain more general constructions in the setting of Jordan algebras, and also in a broader setting of noncommutative and nonassociative algebras.

\section{Jordan algebras and finite quantum spaces}
Here and in the following a Jordan algebra is meant to be unital and finite-dimensional if not differently specified. 
We will provide some definitions for the category of all algebras, thus the term algebra without further specification will generally denote a noncommutative and nonassociative algebra. Also every vector space is meant as vector space over the field of real numbers if not differently specified. We use the Einstein summation for repeated up-down indices.
\begin{mydef}
A \textit{Jordan algebra} is a vector space $J$ together with a bilinear product $\circ: J \times J\rightarrow J$ such that
\begin{equation}\label{JP1}
x \circ y= y\circ x
\end{equation}
and
\begin{equation}\label{JP2}
x\circ(y\circ x^2)=(x\circ y)\circ x^2
\end{equation}
for any $x,y \in J.$
\end{mydef}
Condition \eqref{JP2} is called Jordan identity and it is equivalent to
\begin{equation}
\left[L_{x\circ y},L_z\right] +\left[L_{z\circ x},L_y\right]+\left[L_{y \circ z},L_x\right]=0
\end{equation}
where $L_x$ denotes the (left) multiplication by $x \in J.$ 
\\We provide two examples of Jordan algebras.
\begin{exa}\label{sja}
Let $(A,\;)$ be an associative algebra, we define the Jordan algebra $A^+=\left(A,\circ\right)$ to be the vector space $A$ equipped with the product given by
\begin{equation}
x \circ y = \frac{1}{2} \left(x y+y x\right)
\end{equation}
for all $x,y\in A.$
 One verifies by direct check of properties \eqref{JP1} and \eqref{JP2} that $A^+$ is a Jordan algebra. Any Jordan algebra which is isomorphic to a Jordan subalgebra of a Jordan algebra of this kind is called a \textit{special Jordan algebra.}
\\In particular if the associative algebra $A$ is endowed with an involution $*:A\rightarrow A$ that is
\begin{eqnarray*}
&(x^*)^*=x
\\&(xy)^*=y^*x^*
\end{eqnarray*}
then the subspace $A_{sa}=\{a \in A\mid a^*=a\}$ of self-adjoint elements in $A$ is not a subalgebra of $(A,\;)$ but it is a Jordan subalgebra of the special Jordan algebra $A^+$ and it is therefore special $\diamond$
\end{exa}
\begin{exa}
The exceptional Jordan algebra $(J^8_3, \circ)$  is defined as follows: its elements are $3\times 3$ hermitian matrices with octonionic entries
\begin{eqnarray*}
J^8_3=\{x\in M_3(\mathbb{O})\mid x=x^*\}
\end{eqnarray*}
and the product $\circ$ is given by the anticommmutator
\begin{equation}
x \circ y = \frac{1}{2} \left(x y+y x\right)
\end{equation}
for any $x,y \in J^8_3.$
It is a classical result (\cite{Albert}) that $J^8_3$ is a Jordan algebra which is not a special one. $\diamond$
\end{exa}
In the following we shall write $xy$ for the product of two elements in Jordan algebras (and in other kind of algebras) when no confusion arises.
\begin{mydef}
An \textit{Euclidean} (or \textit{formally real}) Jordan algebra is a real Jordan algebra $J$ satisfying the \textit{formal reality condition}
\begin{equation}
x^2+y^2=0 \Leftrightarrow x=y=0
\end{equation}
for any $x,y\in J.$
\end{mydef}
 Any Euclidean finite-dimensional Jordan algebra $J$ has a unit, moreover if $x\in J$ there is a spectral resolution of $x$ with real eigenvalues (see \cite{MDV1} for more details).  
\\The above examples are quite exhaustive in view of the following classical theorem (\cite{JNW}).
\begin{theorem}\label{Jsum}
Any finite-dimensional Euclidean Jordan algebra is a finite direct sum of simple Euclidean finite-dimensional Jordan algebras. Any finite-dimensional simple Euclidean Jordan algebra is isomorphic to one of the following:
\begin{equation*}
\begin{split}
& \mathbb{R},  \; JSpin_{n+2}=J_2^{n+1}=\mathbb{R}\oplus \mathbb{R}^{n+2},
\\ &J^1_{n+3}, \; J^2_{n+3}, \; J^4_{n+3}, \; J^8_3 
\end{split}
\end{equation*}
for $n\in \mathbb{N}.$
\end{theorem}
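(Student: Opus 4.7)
The plan is to follow the classical Jordan--von Neumann--Wigner strategy, which proceeds by reducing the classification to a coordinatization problem governed by Hurwitz's theorem on composition algebras.

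The first assertion (decomposition into simples) would follow from the theory of idempotents in a formally real Jordan algebra. Using the existence of a unit and the spectral resolution mentioned just before the statement, any $x \in J$ admits an expansion $x = \sum_i \lambda_i e_i$ on a complete system of orthogonal idempotents. The center $Z(J)$ is a finite-dimensional commutative associative real algebra (since the center of a Jordan algebra is associative and formal reality forces the absence of nilpotents), hence isomorphic to $\mathbb{R}^k$, and its primitive idempotents $z_1, \dots, z_k$ yield the decomposition $J = \bigoplus_\alpha z_\alpha J$ into simple ideals. Finite-dimensionality ensures the induction terminates.

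For the classification of simples, I would fix a maximal orthogonal family $e_1, \dots, e_r$ of primitive idempotents summing to $1$ (the integer $r$ is the rank) and invoke the Peirce decomposition
\begin{equation*}
J = \bigoplus_{i\le j} J_{ij}, \qquad J_{ii}=\mathbb{R}\, e_i, \quad J_{ij}=\{x\in J\,|\, e_i x = e_j x = \tfrac{1}{2} x\} \text{ for } i<j.
\end{equation*}
Primitivity together with formal reality forces $J_{ii}\cong\mathbb{R}$. A standard argument using the Jordan identity applied to $e_i, e_j, e_k$ shows that $J_{ij}\circ J_{jk}\subset J_{ik}$ and, crucially in the simple case, that all off-diagonal pieces $J_{ij}$ ($i\ne j$) have the same dimension $d$. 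One then shows that each $J_{ij}$ carries a natural positive-definite inner product (from the trace form, whose positivity comes from formal reality) and that the multiplication $J_{ij}\times J_{jk}\to J_{ik}$ composed with an appropriate identification endows a single copy of $J_{12}$ with the structure of a \emph{normed composition algebra over $\mathbb{R}$}.

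The main obstacle, and the deepest ingredient, is then Hurwitz's theorem, which limits normed composition algebras over $\mathbb{R}$ to $\mathbb{R}, \mathbb{C}, \mathbb{H}, \mathbb{O}$, forcing $d\in\{1,2,4,8\}$. Once this is granted, one recovers $J$ from its Peirce data as the hermitian $r\times r$ matrices over the appropriate composition algebra $\mathbb{K}_d$. A further obstacle is that this coordinatization requires associativity of the matrix product, which fails for $\mathbb{O}$ as soon as $r\ge 4$: one verifies directly that the Jordan identity breaks down in that range, ruling out $d=8$ for $r\ge 4$ and leaving $J^8_3$ as the unique exceptional case at rank $3$. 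Finally, the low-rank cases must be treated separately: rank $1$ gives $\mathbb{R}$, and rank $2$ gives the spin factors $JSpin_{n+2}=\mathbb{R}\oplus\mathbb{R}^{n+2}$, where the $n+2$-dimensional piece $J_{12}$ carries no further compositional constraint (because the rank-$2$ setting never involves a triple product $J_{ij}J_{jk}J_{ki}$). Assembling the four ranges (rank $1$; rank $2$; rank $3$ with $d\in\{1,2,4,8\}$; rank $\ge 4$ with $d\in\{1,2,4\}$) produces exactly the list in the theorem.
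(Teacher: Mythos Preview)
Your sketch is a faithful outline of the classical Jordan--von~Neumann--Wigner argument (Peirce decomposition, equality of the off-diagonal dimensions in the simple case, coordinatization by a composition algebra, and Hurwitz's theorem to pin down $d\in\{1,2,4,8\}$, with the separate handling of rank~$1$, rank~$2$, and the octonionic obstruction for $r\ge 4$). As a proof plan it is essentially correct, though a fully rigorous version would have to supply the coordinatization step more carefully (this is Jacobson's theorem rather than something that falls out immediately from the Peirce multiplication rules), and the rank-$3$ octonionic case needs a direct verification that $J^8_3$ really is a Jordan algebra, since the coordinatization argument only produces it as a candidate.

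However, the paper does \emph{not} prove this theorem at all: it is stated as a classical result and attributed to \cite{JNW}, with no argument given. So there is no ``paper's own proof'' to compare against---your proposal simply supplies what the paper omits by citation. In that sense your approach neither agrees with nor differs from the paper's; it fills in the reference.
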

In the above statement $J^8_3$ is the only non special Jordan algebra, while $J^1_{n},J^2_{n},J^4_{n}$ denote $n \times n$ hermitian matrices with real, complex and quaternionic entries respectively, with product given by  the anticommutator. $JSpin_{n}=\mathbb{R}\oplus \mathbb{R}^n$ are the \textit{spin factors} equipped with the product
\begin{equation}
(s \oplus v)(s' \oplus v')=(ss' + \langle v, v' \rangle )\oplus(sv'+ s'v)
\end{equation}
where $\langle \cdot, \cdot \rangle$ denotes the Euclidean scalar product on $\mathbb{R}^n.$ The spin factor $JSpin_{1}$ is absent from this list since it is isomorphic to the commutative and associative algebra $\mathbb{R}^2$ which is not simple. The following isomorphisms hold:
\begin{equation*}
J^1_{1}=J^2_{1}=J^4_{1}=J^8_1=\mathbb{R} 
\end{equation*}
and
\begin{equation*}
J^1_{2}= JSpin_{2}, \;   J^2_{2}=JSpin_{3}, \; J^4_2= JSpin_{5}, \; J^8_2= JSpin_{9}
\end{equation*}
while $J^8_n$ is not a Jordan algebra  for $n\geq 4.$
This list gives all finite-dimensional Jordan algebras corresponding to finite quantum spaces.
\section{Center and derivations}
\begin{mydef}
Let $A$ be an algebra, define the \textit{associator} by
\begin{equation}
[x,y,z]=(xy)z-x(yz)
\end{equation}
for any $x,y,z \in A.$
The \textit{center} of $A,$ denoted by $Z(A)$, is the associative and commutative subalgebra of elements $z\in A$ satisfying 
\begin{equation}\label{center}
[x,z]=0, \; [x,y,z]=[x,z,y]=[z,x,y]=0
\end{equation}
for any $ x,y \in A.$
\end{mydef}
One has the following result.
\begin{prop}
Let $A$ be a commutative algebra and let $z \in A,$ then $z\in Z(A)$ if and only if 
\begin{equation}
[x,y,z]=0
\end{equation}
for all $x,y \in A.$
\end{prop}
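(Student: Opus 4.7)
My plan is as follows. The forward implication is immediate from the definition of $Z(A)$. For the converse, assume $A$ is commutative and that $z$ satisfies $[x,y,z]=0$ for all $x,y \in A$. Since commutativity of the product gives $[x,z]=0$ for free, what remains among the conditions in \eqref{center} is to derive the two other associator identities $[x,z,y]=0$ and $[z,x,y]=0$.

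The idea is to shuffle arguments using commutativity and apply the hypothesis in two different orderings. First I would apply the hypothesis to the pair $(y,x)$ to get $(yx)z = y(xz)$, rewrite $(yx)z=(xy)z$ by commutativity, and apply the hypothesis to $(x,y)$ to conclude $y(xz)=x(yz)$. Commutativity of the outer products then turns this into $(xz)y = x(zy)$, which is exactly $[x,z,y]=0$. The remaining identity follows from the chain
\begin{equation*}
(zx)y = (xz)y = x(zy) = x(yz) = (xy)z = z(xy),
\end{equation*}
where the steps use, in order, commutativity of $zx$, the identity just established, commutativity of $zy$, the hypothesis, and commutativity of $(xy)z$. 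This gives $[z,x,y]=0$ and completes the reduction.

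I do not expect any real obstacle: the argument is a short diagram of equalities. The only observation worth highlighting is that applying the associator hypothesis to both pairs $(x,y)$ and $(y,x)$ is what couples with commutativity to produce the desired permutation of the three arguments.
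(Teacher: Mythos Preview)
Your proof is correct and follows essentially the same approach as the paper. Both arguments use commutativity together with the hypothesis applied to the two orderings $(x,y)$ and $(y,x)$; the paper packages this as the associator identities $0=[x,y,z]-[y,x,z]=[y,z,x]$ and $0=-[x,y,z]=[z,x,y]$ (valid under commutativity), whereas you unroll the same steps as an explicit chain of equalities of products.
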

\begin{proof}
The condition $[x,z]=0$ is trivial for any $ x,z \in A$ since we have taken $A$ commutative. If the condition $[x,y,z]=0$ holds, then for every $x,y \in A$ one has:
\begin{equation}
0=[x,y,z]-[y,x,z]=[y,z,x]
\end{equation}
and
\begin{equation}
0=-[x,y,z]=[z,x,y]
\end{equation}
for any $x,y \in A,$ in view of the commutativity.
\end{proof}
In particular, the proposition above is valid for all Jordan algebras.
\begin{mydef}
A \textit{derivation} of an algebra $A$ is a linear endomorphism $X$ of $A,$ such that one has
\begin{equation}
X(xy)= X(x)y +xX(y)
\end{equation}
for all $x,y \in A.$
\end{mydef}
\begin{prop}
The vector space $Der(A)$ of derivations of an algebra $A$ has the following properties:
\begin{enumerate}
\item $Der(A)$ is a Lie algebra with respect to the commutator of endomorphisms.
\item $Der(A)$ is a module over the center $Z(A).$
\item The center of $A$ is stable with respect to derivations, that is $X(z)\in Z(A)$ for all $X\in Der(A)$ and   for any $z \in Z(A).$
\item The following formula holds:
\begin{equation}
[X_1,zX_2]=X_1(z)X_2+z[X_1,X_2]
\end{equation}
for all $X_1, X_2 \in A$ and $z \in  Z(A).$
\end{enumerate}
\end{prop}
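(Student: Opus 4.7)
The plan is to verify the four claims in order, each by a direct calculation that unwinds the Leibniz rule and applies the three centrality conditions in \eqref{center}. For part 1, I would expand $[X_1,X_2](xy) = X_1(X_2(xy)) - X_2(X_1(xy))$ by applying the Leibniz rule twice; the mixed terms of the form $X_i(x)\, X_j(y)$ with $i\neq j$ cancel pairwise, leaving exactly $[X_1,X_2](x)\, y + x\, [X_1,X_2](y)$, while antisymmetry and the Jacobi identity are inherited from the commutator of linear endomorphisms. For part 2, given $z \in Z(A)$ and $X \in Der(A)$, I compute
\begin{equation*}
(zX)(xy) = z\bigl(X(x)\, y\bigr) + z\bigl(x\, X(y)\bigr),
\end{equation*}
then rewrite the first summand using the associator condition $[z, X(x), y] = 0$ and the second using $[x, z, X(y)] = 0$ combined with the commutativity $xz = zx$ (itself a consequence of $z\in Z(A)$). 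The resulting expression $(zX(x))\, y + x\, (zX(y))$ is the Leibniz rule for $zX$, and the remaining $Z(A)$-module axioms are formal.

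For part 3, the key observation is that the Leibniz rule extends to the associator: two applications give
\begin{equation*}
X([a,b,c]) = [X(a), b, c] + [a, X(b), c] + [a, b, X(c)]
\end{equation*}
for any $a, b, c \in A$. To prove $[x, X(z)] = 0$, I apply $X$ to $xz - zx = 0$ and cancel $X(x)z$ against $zX(x)$ using the centrality of $z$. To prove the three associator identities for $X(z)$, I substitute $(a,b,c) = (x,y,z)$, $(x,z,y)$, and $(z,x,y)$ into the displayed formula: in each case the left-hand side vanishes because $z \in Z(A)$, and the first two terms on the right vanish for the same reason, isolating $[x,y,X(z)] = [x,X(z),y] = [X(z),x,y] = 0$.

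Part 4 is then a short calculation that combines everything: both $zX_2$ and $X_1(z)X_2$ lie in $Der(A)$ thanks to parts 2 and 3, and evaluating on $a \in A$ while expanding $X_1(z X_2(a))$ by the Leibniz rule yields
\begin{equation*}
X_1(z X_2(a)) - z X_2(X_1(a)) = X_1(z)\, X_2(a) + z\bigl(X_1 X_2(a) - X_2 X_1(a)\bigr),
\end{equation*}
which is precisely $\bigl(X_1(z)X_2 + z[X_1,X_2]\bigr)(a)$.

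The main obstacle throughout is the nonassociativity of $A$: rebracketing is never free, so each step in parts 2 and 3 must explicitly call on the appropriate associator condition from \eqref{center}. I expect part 3 to be the conceptually most delicate, because it rests on the derivation-law-for-the-associator identity, without which the centrality of $X(z)$ cannot be extracted from the centrality of $z$; all three permutations of the associator conditions in the definition of $Z(A)$ are genuinely used, which is what makes the definition of the center the right one in the nonassociative setting.
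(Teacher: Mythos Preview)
Your proposal is correct and follows essentially the same approach as the paper: the authors dismiss parts (1), (2), and (4) as trivial and prove part (3) via the same derivation-law-for-the-associator identity $X([x,y,z])=[X(x),y,z]+[x,X(y),z]+[x,y,X(z)]$, then use $z\in Z(A)$ to kill three of the four terms, handling the other associator positions and the commutator ``similarly.'' Your write-up is more explicit on parts (1), (2), and (4) and on the precise associator conditions invoked, but the underlying argument is identical.
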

\begin{proof}
$(1),$ $(2)$ and $(4)$ are trivial, we have only to prove stability of the center. Let $z \in Z(A)$ and $ X\in Der(A),$ we have:
\begin{equation}
\begin{split}
&[x,y,X(z)]=(xy)X(z)-x(yX(z))=
\\&=X\left((xy)z\right)-X(xy)z-\left(xX(yz)-x\left(X(y)z\right)\right)=
\\&=X\left((xy)z\right)-X(xy)z-X(x(yz))+X(x)(yz)+x\left(X(y)z\right)=
\\&=X\left([x,y,z]\right)-[X(x),y,z]-[x,X(y),z]=0
\end{split}
\end{equation}
for any $x, y \in A.$ Similarly one proves that $\left[x,X(z),y\right]=\left[X(z),x,y\right]=0$ and $\left[x,X(z)\right]=0.$ 
\end{proof}
Thus the pair $\left(Z(A),Der(A)\right)$ form a Lie-Rinehart algebra (\cite{Rin}, \cite{Hue}).
\\For Jordan algebras, the list of derivations for the finite-dimensional non-exceptional simple Euclidean Jordan algebras covers the list of the non exceptional simple Lie algebra, i.e. the Lie algebras denoted by $\mathfrak{a}_n,$ $\mathfrak{b}_n,$ $\mathfrak{c}_n$ and $\mathfrak{d}_n$ in the Cartan classification, while for the exceptional Jordan algebra $J^8_3$ the algebra of derivations is given by the exceptional Lie algebra $\mathfrak{f}_4$ as shown in the following example.
\begin{exa}
As just mentioned, the Lie algebra of derivations of the exceptional Jordan algebra $J^3_8$ is the exceptional Lie algebra $\mathfrak{f}_4$ (see e.g. \cite{Yokota}).
Introduce the standard basis of the exceptional Jordan algebra
\begin{equation*}
\begin{split}
&E_1=\left(\begin{matrix}1 & 0 & 0 \\0 & 0 & 0  \\0 & 0 & 0 \end{matrix} \right), \;
E_2=\left(\begin{matrix}0 & 0 & 0 \\0 & 1 & 0  \\0 & 0 & 0 \end{matrix} \right), \;
E_3=\left(\begin{matrix}0 & 0 & 0 \\0 & 0 & 0  \\0 & 0 & 1 \end{matrix} \right)
\\&F_1^j=\left(\begin{matrix}0 & 0 & 0 \\0 & 0 & \epsilon_j  \\0 & \overline{\epsilon}_j & 0 \end{matrix} \right), \;
F_2^j=\left(\begin{matrix}0 & 0 & \overline{\epsilon}_j\\0 & 0 & 0  \\\epsilon_j & 0 & 0 \end{matrix} \right), \;
F_3^j=\left(\begin{matrix}0 & \epsilon_j & 0\\\overline{\epsilon}_j & 0 & 0  \\0 & 0 & 0 \end{matrix} \right) \;
\\
\end{split}
\end{equation*}
where $\epsilon_j \; j \in \{0,...,7\}$ are a basis of the octonions, so $\epsilon_0=1,$ $\epsilon^2_j=-1$ for $j\neq 0$ and the multiplication table of octonions holds (see e.g. on \cite{Baez}). 
\\As vector space, $\mathfrak{f}_4$ admits a decomposition
\begin{equation*}
\mathfrak{f}_4= \mathfrak{D}_4 \oplus \mathfrak{M}^-
\end{equation*}
given as follows.
$\mathfrak{D}_4$ is the subspace of derivations which annihilates the diagonal of any element in $J^8_3,$ that is
\begin{equation*}
 \delta E_i=0 \quad i\in \{1,2,3\} 
\end{equation*}
for any $\delta \in \mathfrak{D}_4.$ 
An interesting and concrete characterization of $\mathfrak{D}_4$ is given by the following theorem (see e.g. chapter $2$ of \cite{Yokota}).
\begin{ith}
The algebra $\mathfrak{D}_4$ is isomorphic to $\mathfrak{so}(8)=\mathfrak{d}_4.$ The isomorphism is given via the equality:
\begin{equation*}\label{so8}
\delta \left(\begin{matrix}
\xi_1 & x_3 & \overline{x}_2 
\\\overline{x}_3 & \xi_2 & x_1 
\\ x_2 & \overline{x}_1 & \xi_3
\end{matrix} \right)= \left(\begin{matrix}
0 & D_3x_3 & \overline{D_2x}_2 
\\\overline{D_3x}_3 & 0 & D_1x_1 
\\ D_2x_2 & \overline{D_1x}_1 & 0
\end{matrix} \right)
\end{equation*}
where $ \delta \in \mathfrak{D}_4$ and $D_1, D_2, D_3 \in \mathfrak{so}(8).$ $D_2,D_3$ are determined by $D_1$ from the principle of infinitesimal triality
\begin{equation}\label{infinitesimal triality}
(D_1x)y+ x(D_2)y= \overline{D_3(\overline{xy})} 
\end{equation}
for any $x, y \in \mathbb{O}.$
\end{ith}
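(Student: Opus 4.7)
The plan is to translate the condition of being a derivation that annihilates the diagonal idempotents into algebraic constraints on the off-diagonal components, using the Peirce decomposition of $J^8_3$ with respect to $E_1, E_2, E_3$ as the principal tool.

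First I would determine how $\delta$ acts on the three octonionic slots. A direct calculation shows that for $\{i,k,l\}=\{1,2,3\}$, the subspace $V_i:=\mathrm{span}\{F_i^j : j=0,\ldots,7\}$, identified with $\mathbb{O}$ via $F_i^x \leftrightarrow x$, is the simultaneous eigenspace on which $L_{E_i}$ acts by $0$ and $L_{E_k}$, $L_{E_l}$ act by $\tfrac{1}{2}$. Since $\delta(E_i)=0$, applying the Leibniz rule to the identity $E_k \circ F_i^x = \tfrac{1}{2} F_i^x$ immediately forces $\delta(V_i)\subseteq V_i$, so that $\delta$ restricts to three $\mathbb{R}$-linear endomorphisms $D_1, D_2, D_3$ of $\mathbb{O}$.

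Next, the skew-symmetry of each $D_i$ with respect to the Euclidean inner product on $\mathbb{O}$ follows from the identity $F_i^x \circ F_i^y = \langle x,y\rangle (E_k + E_l)$, whose right-hand side is annihilated by $\delta$: expanding the left-hand side by Leibniz yields $\langle D_i x, y\rangle + \langle x, D_i y\rangle = 0$, so $D_i \in \mathfrak{so}(8)$. The triality relation is then obtained analogously from the product $F_1^x \circ F_2^y = \tfrac{1}{2} F_3^{\overline{xy}}$: applying $\delta$ and taking octonionic conjugates produces exactly $(D_1 x) y + x(D_2 y) = \overline{D_3(\overline{xy})}$, while the cyclic products $F_2 \circ F_3$ and $F_3 \circ F_1$ yield equivalent forms of the same relation.

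Conversely, given any triple in $\mathfrak{so}(8)^3$ satisfying the triality relation, defining $\delta$ by the stated formula on the basis and verifying the Leibniz rule on the mixed products $F_i \circ F_j$ reduces to exactly the identities just derived, so that every such triple integrates to an element of $\mathfrak{D}_4$. The main obstacle, and the genuine content of the theorem, is the infinitesimal triality principle itself: showing that for an arbitrary $D_1\in\mathfrak{so}(8)$ there exist uniquely determined $D_2, D_3 \in \mathfrak{so}(8)$ satisfying the relation above. This is the infinitesimal counterpart of the triality automorphism of $\mathrm{Spin}(8)$, and I would invoke it from the cited monograph of Yokota, thereby realizing $\mathfrak{D}_4$ as all of $\mathfrak{so}(8)$ via the projection $\delta \mapsto D_1$.
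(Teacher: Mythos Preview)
The paper does not actually prove this theorem: it is stated inside an example and attributed to Yokota (``see e.g.\ chapter $2$ of \cite{Yokota}''), with the text resuming immediately afterward with the description of $\mathfrak{M}^-$. There is therefore no paper-side proof to compare against; your proposal supplies an argument where the paper gives only a reference.

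On its own merits, your argument is correct and is in fact the standard one. The Peirce decomposition step is right: since $\delta$ kills each $E_i$, it commutes with every $L_{E_i}$ and hence preserves the joint eigenspaces $V_i$. The identity $F_i^x\circ F_i^y=\langle x,y\rangle(E_k+E_l)$ does force $D_i\in\mathfrak{so}(8)$, and $F_1^x\circ F_2^y=\tfrac12 F_3^{\overline{xy}}$ does yield exactly the stated triality relation after applying $\delta$. One small point: the cyclic products $F_2\circ F_3$ and $F_3\circ F_1$ give the two \emph{other} cyclic forms of the triality relation, not literally the same identity; you need all three (or the knowledge that triality permutes them) to conclude that the converse map lands in $\mathfrak{D}_4$, so it would be worth saying explicitly that the three relations together are equivalent to the single one plus the triality principle. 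Your decision to invoke the infinitesimal triality principle from Yokota for the existence and uniqueness of $(D_2,D_3)$ given $D_1$ is appropriate---that is indeed the substantive external input, and it is precisely what the paper's citation is pointing to.
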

Elements of the vector space $\mathfrak{M}^-$ are $3\times 3$ antihermitian octonion matrices with every element on the diagonal equal to zero.
Every $M \in \mathfrak{M}^-$ defines a linear endomorphism $\tilde{M}:J^8_3\rightarrow J^8_3,$ via the commutator
\begin{equation*}
\tilde{M}(x)=Mx-xM
\end{equation*}
where in the expression above juxtaposition is understood as the usual raw by column matrix product. $\diamond$ 
\end{exa}

The following classical result about derivations of Jordan algebras, due to Jacobson (\cite{Jacobson2}) and Harris \cite{Harris}, is the equivalent of Witehead's first lemma for Lie algebras.
\begin{theorem}\label{Jacobsonder}
Let $J$ be a finite-dimensional semi-simple Jordan algebra, let $X \in Der(J).$ There exists a finite number of couples of elements $x_i,y_i\in J$ such that one has
\begin{equation}
X(z)=\sum [x_i,z,y_i]
\end{equation}  
for any $z \in J.$
\end{theorem}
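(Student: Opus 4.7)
The plan is to recast the statement in terms of inner derivations and then reduce to the simple case. In any commutative algebra,
$$
[x,z,y] = (xz)y - x(zy) = L_y L_x(z) - L_x L_y(z) = -[L_x,L_y](z),
$$
so the right-hand side of the theorem equals $-\sum_i [L_{x_i},L_{y_i}](z)$. Writing $IDer(J)$ for the linear span of the operators $[L_x,L_y]$ (the \emph{inner derivations}), the theorem asserts $Der(J) = IDer(J)$. A direct computation using the Jordan identity shows that each $[L_x,L_y]$ is indeed a derivation, and a straightforward check produces
$$
[X,[L_x,L_y]] = [L_{X(x)},L_y] + [L_x,L_{X(y)}]
$$
for every $X \in Der(J)$, so $IDer(J)$ is a Lie ideal in $Der(J)$.

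To reduce to the simple case, decompose $J = J_1 \oplus \cdots \oplus J_k$ as in Theorem~\ref{Jsum}. The unit $e_\alpha$ of $J_\alpha$ is a central idempotent whose Peirce spectrum on $J$ is $\{0,1\}$. Differentiating $e_\alpha = e_\alpha \circ e_\alpha$ gives $L_{e_\alpha} X(e_\alpha) = \tfrac12 X(e_\alpha)$, and the absence of a $1/2$-Peirce eigenspace forces $X(e_\alpha)=0$. Hence $X$ splits as a direct sum of derivations of the $J_\alpha$, and an inner-derivation presentation on each factor glues to one on $J$.

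It therefore suffices to prove $Der(J_\alpha) = IDer(J_\alpha)$ on each simple Euclidean Jordan algebra. For the hermitian-matrix algebras $J^1_n,J^2_n,J^4_n$ and the spin factors, every derivation arises as a commutator $x \mapsto [a,x]$ with $a$ skew-adjoint in the enveloping $*$-algebra, and polarization rewrites such commutators as sums of Jordan brackets $[L_{x_i},L_{y_i}]$. The main obstacle is the exceptional case $J^8_3$: one must show that $\mathfrak{f}_4 = Der(J^8_3)$ is entirely spanned by the $[L_x,L_y]$. In the triality decomposition $\mathfrak{f}_4 = \mathfrak{D}_4 \oplus \mathfrak{M}^-$ displayed above the statement, every $\tilde M$ with $M \in \mathfrak{M}^-$ is realized as an inner derivation by a direct computation on the basis $E_i, F^j_k$; the $\mathfrak{D}_4 \cong \mathfrak{so}(8)$ summand is then recovered from Lie brackets $[\tilde M_1,\tilde M_2]$, the triality relation \eqref{infinitesimal triality} ensuring that every element of $\mathfrak{so}(8)$ arises in this way.

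A more conceptual route bypassing the case analysis is Jacobson's vanishing theorem for first Jordan cohomology of a semi-simple Jordan algebra with values in any finite-dimensional module, applied to the adjoint module $J$, which identifies $Der(J)/IDer(J)$ with a vanishing cohomology group and so completes the proof.
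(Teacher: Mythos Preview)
The paper does not prove this theorem; it records it as a classical result due to Jacobson \cite{Jacobson2} and Harris \cite{Harris} and moves on. So there is no paper-proof to compare against, only your argument to assess.

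Your reduction to simple summands and the identification $[x,z,y]=-[L_x,L_y](z)$ are correct, as is the observation that $IDer(J)$ is an ideal of $Der(J)$. The gap is that your case analysis rests on Theorem~\ref{Jsum}, which classifies \emph{Euclidean} simple Jordan algebras, whereas the statement is for arbitrary finite-dimensional semi-simple $J$ (and in Jacobson's version, over any field of characteristic $0$). Over $\mathbb{R}$ alone the simple Jordan algebras outside your list include indefinite spin factors $\mathbb{R}\oplus(\mathbb{R}^{p+q},\langle\cdot,\cdot\rangle_{p,q})$ and hermitian matrices over split composition algebras, and for these you give no argument. The treatment of the cases you do cover is also quite sketchy (``polarization rewrites such commutators\ldots'', ``a direct computation on the basis\ldots''), though the outlined strategy for $J^8_3$ via $\mathfrak{f}_4=\mathfrak{D}_4\oplus\mathfrak{M}^-$ is sound.

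Your final paragraph is the real proof and is precisely what the cited references do: the Jordan analogue of Whitehead's first lemma identifies $Der(J)/IDer(J)$ with $H^1(J,J)$, which vanishes for finite-dimensional semi-simple $J$ in characteristic $0$. If you intend to supply a proof rather than a citation, expand that paragraph and drop the case analysis, which is at best redundant and, as written, incomplete.
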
 
\section{Jordan modules}
The familiar definition of bimodules over associative algebras is not suitable for nonassociative algebras such as Jordan algebras. Indeed, due to nonassociativity, such a definition would imply that a Jordan algebra is not a module over itself if one takes the multiplication as action of the algebra. A more correct definition is the following (\cite{Ei}, \cite{Jacobson1}, see also \cite{MDV3},\cite{KOS}):
\begin{mydef}\label{dm}
Let $J$ be a Jordan algebra, a \textit{Jordan bimodule} over $J$ is a vector space $M$ together with two bilinear maps
\begin{eqnarray*}
J\otimes M \rightarrow M \quad x \otimes m \mapsto xm
\\M\otimes J \rightarrow M\quad m \otimes x \mapsto mx 
\end{eqnarray*} 
such that $J\oplus M,$ endowed with the product 
\begin{equation}
(x,m)(x',m')=\left(xx',xm'+mx'\right)
\end{equation}
 is a Jordan algebra by itself. 
\end{mydef}
This definition is equivalent to require the following properties of the action of the Jordan algebra $J$ on its module $M:$
\begin{equation}\label{pmj}
\begin{cases}
mx= xm
\\x(x^2m)=x^2(mx)
\\(x^2y)m-x^2(ym)= 2((xy)(x m )- x(yxm)) 
\\\mathds{1}_J m=m
\end{cases}
\end{equation}
far any $x,y \in J$ and $m\in M.$
\\Notice that from the first of relations above one has not to specify if using left or right multiplication so we shall call Jordan module any bimodule over a Jordan algebra.
The second relation can also be written as
\begin{equation}
\left[L_x,L_{x^2}\right]=0
\end{equation}
while the third is written as
\begin{equation}
L_{x^2y}-L_{x^2}L_{y}-2L_{xy}L_{x}+2L_xL_{y}L_x=0
\end{equation}
which is equivalent to the conditions
\begin{equation}
\begin{cases}
L_{x^3}-3L_{x^2}L_x+2L^3_x=0
\\ \left[\left[L_x,L_{y}\right],L_{z}\right]+L_{\left[x,y,z\right]}=0
\end{cases}
\end{equation}
for every $x,y,z \in J,$ where here $L_x$ denote the multiplication by $x \in J$ in $M.$
\begin{exa}
It follows from definition \eqref{dm} that any Jordan algebra $J$ is a module over itself. More generally, let $J$ be a finite-dimensional Jordan algebra, a free $J$-module $M$ is of the form
\begin{equation*}
M=J \otimes E
\end{equation*}
where $E$ is a finite-dimensional vector space and the action of $J$ on $M$ is given by multiplication on the first component of $M.$ It turns out that, when $J$ is the exceptional Jordan algebra, any finite module over $J$ is a free module \cite{Jacobson1}. $\diamond$
\end{exa}
\begin{exa}
Let $A$ be an associative algebra, let $J\subseteq A^+$ be a special Jordan algebra as in example \eqref{sja}. Any element $x\in J$ is also an element of $A$ and $A$ is endowed with $J$-module structure by setting 

\begin{equation}
L_x a= x\circ a= \frac{1}{2}(ax+xa)
\end{equation}
for any $x\in J$ and $a\in A.$ In the two following examples the same construction is explicitely given for the antihermitian real, complex and quaternionic matrices as module over hermitian matrices and for the Clifford algebras $Cl(\mathbb{R}^n)$ as modules over the spin factors $JSpin_n.$
\end{exa}
\begin{exa}\label{ah}
Denote by $A^i_n (i=1,2,4)$ the vector space of antihermitian matrices with real, complex and quaternionic entries respectively. $A^i_n$ is a module over the special Jordan algebra $J^i_n$ with action given by the matrix anticommutator:
\begin{equation}\label{ahonh}
L_x a= x\circ a= \frac{1}{2}(ax+xa) 
\end{equation} 
for any $x \in J^i_n$ and $a \in A^i_n.$
Moreover, taking $J^i_n$ as free module over itself we have:
\begin{equation*}
J^i_n\oplus A^i_n= M^i_n
\end{equation*}
which is the $J$-module of $n\times n$ real, complex or quaternionic matrices with action of $J$ defined as above by \eqref{ahonh}. $\diamond$
\end{exa}
\begin{exa} \label{cliff}
The Clifford algebra
\begin{equation*}
Cl\left(\mathbb{R}^n\right)=\frac{T\left(\mathbb{R}^n\right)}{\left(\{x\otimes x=||x||^2,\; \forall x\in \mathbb{R}^n\}\right)}
\end{equation*}
is a module over the Jordan algebra $JSpin_{n}=\mathbb{R}\oplus \mathbb{R}^n$ with action given by
\begin{equation}
L_x[y]=\frac{1}{2}\left([x\otimes y]+[y\otimes x]\right)
\end{equation}   
for any $x \in \mathbb{R}^n$ and $[y]\in Cl(\mathbb{R}^n).$
\end{exa}
	\begin{mydef}
Let $J$ be a Jordan algebra, let $M$ and $N$ be two modules over $J,$ then a \textit{module homomorphism} between $M$ and $N$ is a linear map $\varphi:M\rightarrow N$ such that
\begin{equation}\label{homomorphism}
x\varphi(m)=\varphi(xm) 
\end{equation}
for all $m \in M $ and $ x \in J.$
\end{mydef}

For homomorphisms between free modules over a fixed Jordan algebra, one has the following results.
\begin{theorem}\label{main}
Let $J$ be a finite-dimensional unital simple Jordan algebra, let $M=J\otimes E$ and $N=J\otimes F,$ where $E$ and $F$ are two finite-dimensional vector spaces, be free modules over $J.$ Then every module homomorphism $\varphi:M\rightarrow N$ is of the form
\begin{equation}
\varphi(x\otimes v) = x\otimes Av \quad x \in J, v \in E
\end{equation}
where $A: M\rightarrow N$ is a linear map.
\end{theorem}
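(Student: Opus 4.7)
The strategy is to peel $\varphi$ back to the coefficient spaces $E$ and $F$ by applying $J$-linearity twice and exploiting that a simple finite-dimensional Jordan algebra has trivial centre. Note first that for $x\otimes Av$ to make sense the map in the statement must be read as $A:E\to F$; the printed target $M\to N$ is a typographical slip.

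Fix $v\in E$ and expand
\[
\varphi(\mathds{1}_J\otimes v)=\sum_i y_i\otimes w_i\in J\otimes F
\]
with $\{w_i\}\subset F$ chosen linearly independent. Applying \eqref{homomorphism} to $x\otimes v=x\cdot(\mathds{1}_J\otimes v)$ immediately gives $\varphi(x\otimes v)=\sum_i(xy_i)\otimes w_i$, so the restriction of $\varphi$ to $J\otimes\langle v\rangle$ is entirely encoded by the tuple $(y_i)\subset J$.

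I now apply the module-homomorphism property a second time, to $x'\cdot(x\otimes v)=(x'x)\otimes v$: comparing $\varphi((x'x)\otimes v)=\sum_i((x'x)y_i)\otimes w_i$ with $x'\varphi(x\otimes v)=\sum_i(x'(xy_i))\otimes w_i$, the linear independence of the $w_i$ forces $[x',x,y_i]=0$ for all $x,x'\in J$. The proposition on centres proved earlier in the text (vanishing of one associator in a commutative algebra entails the vanishing of all three associators and the commutator) then puts each $y_i$ in $Z(J)$.

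For any finite-dimensional simple unital Jordan algebra one has $Z(J)=\mathbb{R}\mathds{1}_J$, so $y_i=\lambda_i\mathds{1}_J$ and $\varphi(\mathds{1}_J\otimes v)=\mathds{1}_J\otimes\bigl(\sum_i\lambda_i w_i\bigr)$. Setting $Av:=\sum_i\lambda_i w_i$ (this is well defined, since $w\mapsto\mathds{1}_J\otimes w$ is injective) produces a linear map $A:E\to F$, linearity being inherited from $\varphi$; feeding this back into the first display yields $\varphi(x\otimes v)=x\otimes Av$. The only non-routine step is the identification $Z(J)=\mathbb{R}\mathds{1}_J$, which is not proved in the preceding text and would have to be cited or checked case by case via Theorem~\ref{Jsum}: for $JSpin_n$ with $n\geq 2$ a short computation of $[(a,u),(b,w),(c,z)]$ does it, and for the matrix algebras $J^i_{n+3}$ and $J^8_3$ it reduces to the classical fact that only scalars are nuclear in a matrix algebra.
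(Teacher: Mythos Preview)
Your proof is correct and follows essentially the same route as the paper's: evaluate $\varphi$ at $\mathds{1}_J\otimes v$, use $J$-linearity twice to force the $J$-coefficients into $Z(J)$, and then invoke simplicity to reduce to scalars. The paper organises the argument around chosen bases of $E$ and $F$ rather than an arbitrary $v$ with linearly independent $w_i$, and it likewise asserts $Z(J)=\mathbb{R}$ directly from simplicity without the additional justification you flag.
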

\begin{proof}
For sake of simplicity, start by taking $M=N=J,$ then a module homomorphism is a linear map $f:J\rightarrow J$ such that:
\begin{equation}
x\varphi (y)=\varphi(xy)
\end{equation}
for any $x,y \in J.$
In particular:
\begin{equation}
\varphi(x)=x\varphi(1)=x A
\end{equation}
for some $A \in J$ such that $A=\varphi(1).$
\\Now, from definition of module homomorphism, we have:
\begin{eqnarray}
\varphi(xy)=(xy)A=x\phi(y)=x\left(yA\right)\Rightarrow \left[x,y,A\right]=0
\end{eqnarray}
for all $x,y \in J,$ hence $A\in Z(J).$ Thus  $A \in \mathbb{R},$ in view of simplicity of $J.$ 
\\More generally let $M=J\otimes E$ and $N=J\otimes F,$ denote as $e_\alpha$ and $f_\alpha$ a basis of $E$ and $F$ respectively. We have
\begin{equation}
\varphi(1\otimes e_\alpha)= A_\alpha^\lambda \otimes f_\lambda
\end{equation}
for some $A_\alpha^\lambda \in J.$ With the same argument as above, we get:
\begin{eqnarray}
&\varphi(xy\otimes e_\alpha)=(xy)\varphi(1\otimes e_\alpha)=(xy)A_\alpha^\lambda \otimes f_\lambda
\\ &\varphi(xy\otimes e_\alpha)=(x\varphi(y\otimes e_\alpha))=x(y\varphi(1\otimes e_\alpha))=x(yA_\alpha^\lambda ) \otimes f_\lambda
\end{eqnarray}
and so every $A_\alpha^\lambda\in Z(J)$ and it is a real number. Using properties of tensor product we have
\begin{equation}
A_\alpha^\lambda \otimes f_\lambda = 1 \otimes  A_\alpha^\lambda f_\lambda
\end{equation}
and the statement follows by taking as map $A$ from $E$ into $F$ the linear transformation defined by $A(e_\alpha)=   A_\alpha^\lambda f_\lambda.$
\end{proof}
If the Jordan algebra $J$ is not simple the above theorem is generalized as follows:
\begin{lemma}
Let $J$ be a finite-dimensional unital Jordan algebra, let $M=J\otimes E$ and $N=J\otimes F$ be free modules over $J,$ with $E,F$ finite-dimensional vector space of dimension $m$ and $n$ respectively. Then if $f:M\rightarrow N$ is homomorphism of $J$ modules, there exist $\alpha_k \in Z(J)$ and $f_k \in M_{m\times n}$ such that:
\begin{equation}
f(1\otimes e)= \sum_k \alpha_k \otimes f_k(e)
\end{equation}
  for any $e \in E.$
\end{lemma}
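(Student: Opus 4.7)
The plan is to run the same argument as Theorem \ref{main} verbatim up to the point where simplicity of $J$ was invoked, and then, instead of collapsing $Z(J)$ down to $\mathbb{R}$, to simply expand in a basis of the (finite-dimensional) center. So the proof is essentially a ``relative'' version of the previous one, with scalars replaced by the center.

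Concretely, I fix a basis $\{e_\alpha\}_{\alpha=1}^m$ of $E$ and a basis $\{f_\lambda\}_{\lambda=1}^n$ of $F$, and write
\begin{equation*}
f(1\otimes e_\alpha)= A^\lambda_\alpha \otimes f_\lambda, \qquad A^\lambda_\alpha \in J.
\end{equation*}
From the $J$-linearity of $f$ together with the module axiom, evaluating $f((xy)\otimes e_\alpha)$ in two different ways gives
\begin{equation*}
(xy)A^\lambda_\alpha \otimes f_\lambda = f((xy)\otimes e_\alpha) = x\bigl(yA^\lambda_\alpha\bigr)\otimes f_\lambda
\end{equation*}
for every $x,y\in J$. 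Linear independence of the $f_\lambda$ forces $[x,y,A^\lambda_\alpha]=0$ for all $x,y\in J$, and since $J$ is commutative the proposition of Section~3 yields $A^\lambda_\alpha\in Z(J)$.

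This is the only non-routine step, and in the non-simple setting it is where the argument now stops: we cannot further identify $A^\lambda_\alpha$ with a real scalar. Instead, since $Z(J)$ is a finite-dimensional subspace of $J$, I pick a basis $\{\alpha_k\}$ of it and write $A^\lambda_\alpha = \sum_k c^{\lambda,k}_\alpha\, \alpha_k$ with $c^{\lambda,k}_\alpha \in \mathbb{R}$. For each index $k$, the coefficients $(c^{\lambda,k}_\alpha)$ define a linear map $f_k : E\to F$ by $f_k(e_\alpha) = c^{\lambda,k}_\alpha f_\lambda$, i.e.\ an element of $M_{m\times n}$.

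Substituting this back and using bilinearity of the tensor product,
\begin{equation*}
f(1\otimes e_\alpha)= \sum_k c^{\lambda,k}_\alpha\, \alpha_k \otimes f_\lambda
= \sum_k \alpha_k \otimes c^{\lambda,k}_\alpha f_\lambda
= \sum_k \alpha_k \otimes f_k(e_\alpha),
\end{equation*}
and extending by linearity in $E$ gives the stated form of $f(1\otimes e)$ for every $e\in E$. The only potential subtlety is confirming that the computation showing $A^\lambda_\alpha$ lies in $Z(J)$ uses only the associator relation (handled by the earlier proposition in Section~3), so the unit and Jordan identity do not interfere; this is immediate from commutativity of $J$.
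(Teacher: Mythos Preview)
Your argument is correct and is exactly the intended one: the paper does not write out a separate proof of this lemma, presenting it as the immediate generalization of Theorem~\ref{main}, and your proof is precisely that generalization---rerunning the associator computation to land each $A^\lambda_\alpha$ in $Z(J)$ and then expanding in a basis of the center rather than collapsing to $\mathbb{R}$. There is nothing to add.
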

From the above lemma we deduce the following result.
\begin{theorem} Let $J$ be a finite-dimensional unital Jordan algebra with center $Z(J)$.  Denote as $FMod_J$ the category of free Jordan modules over $J$ with homomorphisms of Jordan modules and as $FMod_{Z(J)}$ the category of free modules over the associative algebra $Z(J)$ with homomorphisms of modules over associative algebras. Then the following functor is an isomorphism of categories:
\begin{equation}
\begin{split}
\mathcal{F}: \quad  & J \otimes E \mapsto Z(J)\otimes E
\\ \quad& (\varphi:J \otimes E \rightarrow J \otimes F) \mapsto (\varphi_{Z(J)}: Z(J) \otimes E \rightarrow Z(J) \otimes F)
 \end{split}
\end{equation}
where $\varphi_{Z(J)}$ is the restriction of $\varphi$ to $Z(J) \otimes E.$
\end{theorem}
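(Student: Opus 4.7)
\emph{Plan.} The plan is to verify that $\mathcal{F}$ is (i) bijective on objects and (ii) for each pair $(E,F)$, a bijection between the respective Hom-sets. Bijectivity on objects is evident: both collections are indexed strictly by the finite-dimensional vector spaces $E$, and $\mathcal{F}$ acts as the identity on this indexing. Preservation of identities and composition is automatic because $\mathcal{F}$ is defined by restriction. The entire content of the theorem thus reduces to the morphism-level statement, which I would attack with the help of the preceding lemma.

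For the well-definedness of $\varphi_{Z(J)}$, I would invoke the lemma to write $\varphi(1\otimes e) = \sum_k \alpha_k \otimes f_k(e)$ with $\alpha_k\in Z(J)$. For $z\in Z(J)$ one has $\varphi(z\otimes e) = z\,\varphi(1\otimes e) = \sum_k (z\alpha_k)\otimes f_k(e)$, which lies in $Z(J)\otimes F$ because $Z(J)$ is a (commutative associative) subalgebra of $J$; this simultaneously establishes $Z(J)$-linearity of the restriction. Faithfulness is then immediate: since $\mathds{1}_J\in Z(J)$, the restriction $\varphi_{Z(J)}$ already determines $\varphi(1\otimes e)$ for every $e\in E$, and $J$-linearity forces $\varphi(x\otimes e) = x\,\varphi(1\otimes e)$, recovering $\varphi$ on all of $J\otimes E$.

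The principal obstacle I anticipate is surjectivity on morphisms (fullness). Given a $Z(J)$-homomorphism $\tilde\varphi:Z(J)\otimes E\to Z(J)\otimes F$, I would choose a decomposition $\tilde\varphi(1\otimes e) = \sum_k \alpha_k\otimes f_k(e)$ with $\alpha_k\in Z(J)$ and propose the lift $\varphi(x\otimes e) := \sum_k (x\alpha_k)\otimes f_k(e)$ for $x\in J$, $e\in E$. The crucial verification is that $\varphi$ is a $J$-module homomorphism, not only a $Z(J)$-module one: for arbitrary $y\in J$,
\[
\varphi\bigl(y(x\otimes e)\bigr) - y\,\varphi(x\otimes e) = \sum_k [y,x,\alpha_k]\otimes f_k(e),
\]
which vanishes because each $\alpha_k\in Z(J)$ satisfies $[y,x,\alpha_k]=0$. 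So centrality of the coefficients produced by the lemma is precisely what permits extension of the action from $Z(J)$ to all of $J$; by construction $\mathcal{F}(\varphi) = \tilde\varphi$, concluding the proof.
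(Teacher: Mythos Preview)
Your proposal is correct and follows essentially the same route as the paper: both use the preceding lemma to see that $\varphi$ carries $Z(J)\otimes E$ into $Z(J)\otimes F$, and both construct the inverse on morphisms by the formula $\varphi_J(x\otimes e)=x\,\tilde\varphi(1\otimes e)$. Your argument is in fact more complete than the paper's, since you explicitly verify via the vanishing associator $[y,x,\alpha_k]=0$ that the lift is a $J$-module homomorphism, a point the paper leaves implicit.
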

\begin{proof}
We begin by checking functoriality of $\mathcal{F}.$ Of course the image of the identity of $FMod_J$ is the identity of $FMod_{Z(J)}.$ Let
$\varphi:J \otimes E \rightarrow J \otimes F$ and $\phi:J \otimes F\rightarrow J \otimes H$ be two homomorphisms of free modules over $J$, then we have:
\begin{equation}
\mathcal{F}(\phi\circ\varphi)=(\phi\circ\varphi)_{Z(J)}
\end{equation}
From theorem \eqref{main} we know that $\varphi(Z(J) \otimes E) \subseteq Z(J) \otimes F,$ and so:
\begin{equation}
\mathcal{F}(\phi\circ\varphi)=(\phi\circ\varphi)_{Z(J)}=\phi_{Z(J)}\circ \varphi_{Z(J)}= \mathcal{F} (\phi) \circ \mathcal{F} (\varphi)
\end{equation}
which proves that $\mathcal{F}$ is a functor. 
Define $\mathcal{F}^{-1}$ as:
\begin{equation}
\begin{split}
\mathcal{F}^{-1}: \quad  & Z(J) \otimes E \rightarrow J\otimes E
\\ \quad & (\varphi: Z(J) \otimes E \rightarrow Z(J) \otimes F ) \mapsto (\varphi_J :J \otimes E \rightarrow J \otimes F)
 \end{split}
\end{equation}
where $\varphi_J$ is defined by regarding the elements of $Z(J)$ as elements of $J$ and setting:
\begin{equation}
\varphi_J(x\otimes e):= x \varphi (1\otimes e)
\end{equation}
for any $x \in J$ and $e \in E.$
\end{proof} 

Finally let us introduce the following notion that will be useful in the next section.
\begin{mydef}
Let $J$ be a Jordan algebra, let $M$ be a module over $J.$ A \textit{derivation} $d$ of $J$ into $M$ is a linear map $d:J \rightarrow M$ such that:
\begin{equation}
d(xy)= d(x)y+ x d(y) 
\end{equation}
for any $x, y \in J.$
\end{mydef} 
\section{Differential calculi}
Let us recall the following standard ``super version" of Jordan algebras (see e.g. in \cite{Kac}).
\begin{mydef}
A \textit{Jordan superalgebra} $\Omega=\Omega^0 \oplus \Omega^1$ is a $\mathbb{Z}_2$-graded vector space with a graded commutative product, meaning:
\begin{equation*}
xy= (-1)^{\mid x \mid \mid y\mid} yx
\end{equation*}
for all $x, y \in \Omega$ and such that this product respects the Jordan identity.
\end{mydef}
For a Jordan superalgebra it holds:
\begin{equation}
\left[x,y,z\right]= (-1)^{\mid y \mid \mid z \mid} \left[z,y,x\right]
\end{equation}
for all $x,y,z \in \Omega.$ If we introduce the graded commutator of $x$ and $y$ as
\begin{equation}
[x,y]_{gr}=xy  +({-1})^{\mid x \mid \mid y\mid} yx
\end{equation}
the Jordan identity is equivalent to:
\begin{equation}
\begin{split}
&(-1)^{\mid x \mid \mid z \mid}\left[L_{xy},L_z\right]_{gr}+ (-1)^{\mid z \mid \mid y \mid}\left[L_{zx},L_y\right]_{gr}+
\\&+(-1)^{\mid y \mid \mid x \mid}\left[L_{yz},L_x\right]_{gr} =0 
\end{split}
\end{equation}
for all $x,y,z \in \Omega.$ In what follows, we will deal with \textit{$\mathbb{N}$-graded Jordan superalgebras}, that means we are going to consider $\mathbb{N}$-graded  algebras $\Omega=\oplus_{\mathbb{N}}\Omega^n$ that are also Jordan superalgebras with respect to the $\mathbb{Z}_2$-grading induced by the decomposition in even and odd parts, that we shall denote respectively as $\Omega^+$ and $\Omega^-.$ 
\begin{mydef}
A \textit{differential graded Jordan algebra} is an $\mathbb{N}$-graded Jordan superalgebra $\Omega$ equipped with a \textit{differential}, which is a antiderivation $d$ of degree $1$ and with square zero, that is one has
\begin{eqnarray*}
d\Omega^n\subset \Omega^{n+1}
\\d^2=0
\end{eqnarray*}
and
\begin{equation*}
d(xy)=(dx)y+ (-1)^{\mid x \mid} x d(y)
\end{equation*}
for all $x,y \in \Omega.$ 
\end{mydef}
Such differential graded Jordan algebras are our models for generalizing differential forms, in particular when $\Omega^0=J$ we say that $\left(\Omega,d\right)$ is {\sl a differential calculus} over the Jordan algebra $J$ (this terminology is inspired from \cite{Woronowicz}).
A model of differential calculus over a Jordan algebra is the \textit{derivation-based differential calculus} which has been introduced in \cite{MDV1} and generalizes differential forms as defined in \cite{Koszul}.
\\Let us denote as $\Omega^1_{Der}(J)$ the $J$-module of $Z(J)$-homomorphisms from $Der(J)$ into $J.$ We define a derivation $d_{Der}:J \rightarrow \Omega^1_{Der} (J)$ by setting:
\begin{equation}
\left(d_{Der}x\right)(X):= X(x)
\end{equation}
\\for any $x \in J$ and $X\in Der(J).$
We refer to the pair $\left(\Omega^1_{Der}(J), d_{Der}\right)$ as the derivation-based first order differential calculus over $J.$
\\Let $\Omega^n_{Der}(J)$ be the $J$-module of $n$-$Z(J)$-linear antisymmetric mapping of $Der(J)$ into $J,$ that is any $\omega \in  \Omega^n_{Der}(J)$ is a $Z(J)$-linear mapping  $\omega: \wedge_{Z(J)}^n Der(J) \rightarrow J.$
\\Then $\Omega_{Der}(J)=\oplus_{n\geq 0} \Omega^n_{Der}(J),$  is an $\mathbb{N}$-graded Jordan superalgebra  with respect to wedge product of linear maps.  
One extends $d$ to a linear endomorphism of $\Omega_{Der}(J)$ by setting
\begin{equation*}
\begin{split}
&(d_{Der}\omega)(X_0,...,X_n)=\sum _{0 \leq k \leq n}(-1)^k X_k \left(\omega\left(X_0,...,\widehat{X_{k}},... X_n\right)\right)+
\\& +\sum _{0 \leq r <s \leq n}(-1)^{r+s}  \omega\left([X_r,X_s],X_0,...,\widehat{X_{r}},...,\widehat{X_{s}},... X_n\right) 
\end{split}
\end{equation*}
for any $\omega \in \Omega_n(J).$ This extension of
$d_{Der}$ is an antiderivation and $d^2_{Der}=0.$ Thus $\Omega_{Der}(J)$ endowed with $d_{Der}$ is a differential graded Jordan superalgebra with $\Omega_0=J.$ We refer to $\left(\Omega_{Der}(J), d_{Der}\right)$ as the \textit{derivation-based differential calculus over $J.$} 
\\In general, the derivation-based differential calculus does not play any privileged role in the theory of differential calculus over a given Jordan algebra. Howewer in the case of exceptional Jordan algebra $J^8_3,$ the derivation-based differential calculus is characterized up to isomorphism by the following universal property (\cite{MDV1}).
\begin{theorem} \label{dce}
Let $(\Omega,d)$ be a differential graded Jordan algebra and let $\phi: J^8_3\rightarrow \Omega^0$ be an homomorphism of unital Jordan algebras. Then $\phi$ has a unique extension $\tilde{\phi}: \Omega_{Der}\left(J^8_3\right)\rightarrow \Omega$ as homomorphism of differential graded Jordan algebras.
\end{theorem}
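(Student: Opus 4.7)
The plan is to establish the universal property in two stages: first, show that $\Omega_{Der}(J^8_3)$ is generated as a differential graded Jordan algebra by its degree-zero part $J^8_3$, forcing uniqueness of any extension of $\phi$; then construct such an extension by prescribing its values on generators and verifying well-definedness.

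For uniqueness, I would exploit that $J^8_3$ is simple, so $Z(J^8_3) = \mathbb{R}$ and $\Omega^1_{Der}(J^8_3)$ identifies with the space of $\mathbb{R}$-linear maps $\mathfrak{f}_4 \to J^8_3$. The key tool is Theorem \ref{Jacobsonder} (Jacobson--Harris), which asserts that every derivation of $J^8_3$ admits an inner representation $X(z) = \sum_i [x_i, z, y_i]$. Combined with the associator identity $[x,z,y] = (xz)y - x(zy)$ and the Leibniz rule for $d_{Der}$, this implies that the $J^8_3$-submodule of $\Omega^1_{Der}(J^8_3)$ generated by $d_{Der}(J^8_3)$ coincides with $\Omega^1_{Der}(J^8_3)$ itself, so that every one-form has the shape $\sum_k a_k \, d_{Der}(b_k)$. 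Wedge products then propagate this generation property to all degrees $n \geq 2$. Consequently, any DG Jordan morphism extending $\phi$ is determined on the generators $x$ and $d_{Der} x$, giving uniqueness.

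For existence, I would set $\tilde\phi(x) = \phi(x)$ for $x \in J^8_3$ and $\tilde\phi(d_{Der} x) = d(\phi(x))$, and extend multiplicatively via the Jordan superalgebra product. Since $\phi$ is a unital Jordan morphism and the differential $d$ in $\Omega$ is a square-zero antiderivation of degree one, the extension automatically respects graded commutativity, the Leibniz rule, and the intertwining of differentials on all generators.

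The main obstacle is well-definedness: one must verify that every relation among generators in $\Omega_{Der}(J^8_3)$ is a formal consequence of the DG Jordan algebra axioms over $J^8_3$, so that the relation is automatically satisfied in every target $\Omega$. This is precisely where the exceptional nature of $J^8_3$ enters essentially---the Jacobson--Harris representation of each derivation by associators in $J^8_3$ ensures that every nontrivial relation among the $d_{Der} x_i$ descends (via the Leibniz rule and $d^2 = 0$) to a relation inside $\Omega^0 = J^8_3$, which is preserved by the unital Jordan morphism $\phi$. Once this reduction is carried out, the prescribed $\tilde\phi$ is consistent on all of $\Omega_{Der}(J^8_3)$, completing the proof.
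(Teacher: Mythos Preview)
Your uniqueness argument is fine: once one knows that $\Omega^1_{Der}(J^8_3)$ is spanned by elements $a\,d_{Der}b$ (which does follow from the Jacobson--Harris theorem), any DG-morphism extending $\phi$ is determined on generators and hence unique.

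The genuine gap is in existence, precisely at the point you label the ``main obstacle''. You assert that every relation $\sum_k a_k\, d_{Der}b_k=0$ in $\Omega^1_{Der}(J^8_3)$ reduces, via Leibniz and $d^2=0$, to a relation already visible in $J^8_3$, and is therefore respected by $\phi$. This is not established, and in fact cannot follow from the ingredients you invoke. The only special feature of $J^8_3$ you use is Theorem~\ref{Jacobsonder}, but that theorem holds for \emph{every} finite-dimensional semisimple Jordan algebra. If your argument worked, it would prove the universal property of $\Omega_{Der}(J)$ for all such $J$, contradicting the paper's explicit remark that the statement is specific to the exceptional algebra. Concretely, for a special simple $J$ there exist non-free $J$-modules $M$ and derivations $\delta:J\to M$; then $\Omega^0=J$, $\Omega^1=M$, $d=\delta$ gives a target in which a relation $\sum_k a_k\, d_{Der}b_k=0$ need not imply $\sum_k \phi(a_k)\,\delta(\phi(b_k))=0$.

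The paper's proof bypasses this issue by invoking the property that actually singles out $J^8_3$: every $J^8_3$-module is free. Hence each $\Omega^n$ is of the form $J^8_3\otimes\Gamma^n$, so $\Omega=J^8_3\otimes\Gamma$ for a graded algebra $\Gamma$ (associative by Lemma~\ref{lemmamio}). One then writes $dx=\partial_k x\otimes c^k_\alpha e^\alpha$ in the tensor basis and defines $\tilde\phi(x\otimes\theta^k)=x\otimes c^k_\alpha e^\alpha$ directly, so well-definedness is automatic rather than something to be checked relation by relation. Your plan could be salvaged, but only by replacing Jacobson--Harris with the freeness of $J^8_3$-modules as the structural input; the latter is what forces the relations in $\Omega_{Der}^1(J^8_3)$ to hold in any target.
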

To prove this theorem we shall need the following result.
\begin{lemma}\label{lemmamio}
Let $\Gamma$ be a Jordan superalgebra, then $J^8_3\otimes\Gamma=\oplus_{n\in \mathbb{N}}J^8_3\otimes\Gamma^n$ is a Jordan superalgebra if and only if $\Gamma $ is an associative superalgebra.
\end{lemma}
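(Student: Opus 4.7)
The plan is to reduce both implications to a single computation of the associator in the tensor product $J^8_3 \otimes \Gamma$. For homogeneous elements $\xi = a \otimes \alpha$, $\eta = b \otimes \beta$, $\zeta = c \otimes \gamma$, a direct expansion of $(\xi\eta)\zeta - \xi(\eta\zeta)$ together with the addition and subtraction of the cross term $(ab)c \otimes \alpha(\beta\gamma)$ yields the decomposition
\begin{equation*}
[\xi, \eta, \zeta]_{J^8_3 \otimes \Gamma} = [a, b, c]_{J^8_3} \otimes \alpha(\beta\gamma) + (ab)c \otimes [\alpha, \beta, \gamma]_\Gamma.
\end{equation*}
The associator in $J^8_3 \otimes \Gamma$ thus splits into a ``$J^8_3$-associator'' contribution and a ``$\Gamma$-associator'' contribution; super signs arising upon polarization touch only the $\Gamma$-factors, since $J^8_3$ is purely of degree zero. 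Graded commutativity of the tensor product is automatic from commutativity of $J^8_3$ and graded commutativity of $\Gamma$.

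For the implication $(\Leftarrow)$, if $\Gamma$ is associative the second summand vanishes identically. The multilinearized super-Jordan identity for $J^8_3 \otimes \Gamma$ then reduces termwise to the multilinearized Jordan identity for $J^8_3$ tensored with ordinary products in $\Gamma$, and each of its terms vanishes because $J^8_3$ is a Jordan algebra. Combined with graded commutativity, this gives the super-Jordan structure on $J^8_3 \otimes \Gamma$.

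For the implication $(\Rightarrow)$, assume that $J^8_3 \otimes \Gamma$ is a Jordan superalgebra, and apply the (polarized) super-Jordan identity to elements of the form $\xi = a \otimes \alpha + a' \otimes \alpha'$, $\eta = b \otimes \beta$, $\zeta = c \otimes \gamma$ with $a, a', b, c \in J^8_3$ and $\alpha, \alpha', \beta, \gamma \in \Gamma$ homogeneous. After substituting the associator decomposition, the ``$J^8_3$-associator'' contributions already cancel among themselves by virtue of the Jordan identity in $J^8_3$. What remains is an identity of the schematic form
\begin{equation*}
\sum_k m_k(a, a', b, c) \otimes n_k(\alpha, \alpha', \beta, \gamma) = 0,
\end{equation*}
where the $m_k$ are specific non-associative products in $J^8_3$ and the $n_k$ are associators in $\Gamma$. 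By letting $a, a', b, c$ vary independently and using that $J^8_3$ is $27$-dimensional and simple, the family $\{m_k\}$ can be arranged to be linearly independent in $J^8_3$, forcing every $n_k$ to vanish. After depolarization this amounts to $[\alpha, \beta, \gamma]_\Gamma = 0$ for all $\alpha, \beta, \gamma \in \Gamma$, i.e.\ $\Gamma$ is associative.

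The main obstacle is the bookkeeping for $(\Rightarrow)$: one must choose the polarization so that, after the cancellations coming from the Jordan identity in $J^8_3$, the surviving coefficients $m_k$ in $J^8_3$ genuinely form a linearly independent family. The non-speciality and simplicity of $J^8_3$ are essential here, as they guarantee that no identity satisfied by $J^8_3$ beyond the Jordan identity can absorb the $\Gamma$-associator contributions; the only way for the tensor-product identity to hold under arbitrary choices in $J^8_3$ is then for $\Gamma$ itself to be associative.
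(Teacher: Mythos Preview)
Your associator decomposition
\[
[a\otimes\alpha,\;b\otimes\beta,\;c\otimes\gamma]
=[a,b,c]_{J^8_3}\otimes \alpha(\beta\gamma)+((ab)c)\otimes[\alpha,\beta,\gamma]_\Gamma
\]
is correct, and the $(\Leftarrow)$ direction goes through essentially as you say: when $\Gamma$ is associative and supercommutative the $\Gamma$-coefficients $(\alpha_i\alpha_j)(\beta\alpha_k)$ become symmetric under cyclic permutation of $i,j,k$, so the $J^8_3$-associator terms assemble into the linearized Jordan identity of $J^8_3$ and vanish.

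The gap is in $(\Rightarrow)$. Your key sentence is that ``the $J^8_3$-associator contributions already cancel among themselves by virtue of the Jordan identity in $J^8_3$''. Write out the fully polarized identity with $\xi=\sum_i a_i\otimes\alpha_i$, $\eta=b\otimes\beta$: the $J^8_3$-associator part is
\[
\sum_{\mathrm{cyc}}[a_1a_2,b,a_3]\otimes(\alpha_1\alpha_2)(\beta\alpha_3).
\]
The Jordan identity in $J^8_3$ only kills the cyclic sum $\sum_{\mathrm{cyc}}[a_1a_2,b,a_3]$ when the three accompanying $\Gamma$-factors coincide; but $(\alpha_1\alpha_2)(\beta\alpha_3)$, $(\alpha_2\alpha_3)(\beta\alpha_1)$, $(\alpha_3\alpha_1)(\beta\alpha_2)$ agree precisely when $\Gamma$ is associative, which is what you are trying to prove. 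So this block does \emph{not} vanish a priori, and it mixes non-trivially with the $\Gamma$-associator block. Consequently your residual expression is not simply $\sum_k m_k\otimes n_k$ with the $n_k$ pure $\Gamma$-associators, and the subsequent ``linear independence'' step has nothing clean to act on. The appeal to simplicity and non-speciality of $J^8_3$ does not repair this: $J^8_3$ certainly satisfies the linearized Jordan identity, and that is exactly the relation obstructing the separation you want.

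The paper handles $(\Rightarrow)$ differently and this is not a cosmetic difference. It first quotes the (non-super) result that $J^8_3\otimes\Gamma^+$ is Jordan iff $\Gamma^+$ is associative, so $\Gamma^+$ is associative from the start. It then evaluates $[\xi^2,\eta,\xi]=0$ on a carefully chosen $\xi=a_{-1}\otimes 1+a_0\otimes e+\sum a_i\otimes o_i$, picks suitable elements in $J^8_3$ to strip off the $J^8_3$-tensor factor, and is left only with the relation
\[
[\Gamma^-,\Gamma^+,\Gamma^+]+[\Gamma^-,\Gamma^-,\Gamma^+]+[\Gamma^-,\Gamma^-,\Gamma^-]=0.
\]
At this point one still cannot conclude termwise vanishing by linear algebra alone; the paper invokes two structural dichotomy lemmas of Zhelyabin--Shestakov (for a Jordan superalgebra with associative even part, $[\Gamma^-,\Gamma^+,\Gamma^+]$ is either $0$ or all of $\Gamma^-$, and similarly for $[\Gamma^+,\Gamma^-,\Gamma^-]$) to force each summand to be zero. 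That dichotomy is doing real work that your linear-independence argument does not supply.
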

This result is a consequence of the following lemmas proved in \cite{ZS} (Lemma $2$ and Lemma $3$ in \cite{ZS}).
\begin{lemma}\label{lZS1}
Let $\Gamma= \Gamma^+ \oplus \Gamma^-$ be a unital Jordan superalgebra whose even component $\Gamma^+$ is associative, then either one of the two equalities
\begin{equation}
[\Gamma^-, \Gamma^+, \Gamma^+]=0
\end{equation}
or
\begin{equation}
[\Gamma^-, \Gamma^+, \Gamma^+]=\Gamma^-
\end{equation}
holds.
\end{lemma}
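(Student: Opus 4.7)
My plan is to translate the statement into an operator-theoretic one and then exploit the super-Jordan identity to force the dichotomy.

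\emph{Reformulation.} Since $\Gamma^-$ is odd and $\Gamma^+$ is even, graded commutativity gives $am=ma$ for $a\in\Gamma^+$, $m\in\Gamma^-$; graded commutativity on $\Gamma^+$ makes it commutative, and by hypothesis associative. Writing $L_a\in\mathrm{End}(\Gamma^-)$ for multiplication by $a$, the polarized graded Jordan module axiom reads $L_aL_b+L_bL_a=2L_{ab}$ on $\Gamma^-$. A one-line computation gives
\begin{equation*}
[m,a,b]=(ma)b-m(ab)=L_bL_am-L_{ab}m=-\tfrac{1}{2}[L_a,L_b]\,m,
\end{equation*}
so $I:=[\Gamma^-,\Gamma^+,\Gamma^+]=[\mathcal{L},\mathcal{L}]\cdot\Gamma^-$, where $\mathcal{L}\subset\mathrm{End}(\Gamma^-)$ is the image of $\Gamma^+$. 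The dichotomy thus becomes: either $[\mathcal{L},\mathcal{L}]$ annihilates $\Gamma^-$, or $[\mathcal{L},\mathcal{L}]\cdot\Gamma^-=\Gamma^-$.

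\emph{Stability of $I$.} The next step is to show that $I$ is a $\Gamma^+$-submodule of $\Gamma^-$. This is a formal consequence of the Jacobi identity in $\mathrm{End}(\Gamma^-)$: $[L_c,[L_a,L_b]]=[[L_c,L_a],L_b]+[L_a,[L_c,L_b]]\in[\mathcal{L},\mathcal{L}]$, so $L_c(I)\subseteq I$ for every $c\in\Gamma^+$. In particular $I^\perp:=\{n\in\Gamma^-:[n,\Gamma^+,\Gamma^+]=0\}$ is also a $\Gamma^+$-submodule.

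\emph{Coupling via the odd-odd product.} The bridge that removes all intermediate possibilities is the graded-antisymmetric product $\Gamma^-\otimes\Gamma^-\to\Gamma^+$, $(m,n)\mapsto mn$. Specializing the graded Jordan identity to triples of the form $(a,m,n)$ with $a\in\Gamma^+$ and $m,n\in\Gamma^-$ yields, after using associativity of $\Gamma^+$ to eliminate the $\Gamma^+\Gamma^+\Gamma^+$-associator, an operator identity schematically of the form
\begin{equation*}
[L_a,L_{mn}]\;=\;\text{linear combination of }[L_a,L_b]\text{ with }b\in\Gamma^-\cdot\Gamma^-,
\end{equation*}
together with its polarizations. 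This identity ties the derived bracket $[\mathcal{L},\mathcal{L}]$ to elements of $\Gamma^+$ produced by the odd-odd product, and is the engine that drives the dichotomy.

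\emph{Conclusion and main obstacle.} To conclude, assume $I\neq 0$ and pick an arbitrary $n\in\Gamma^-$; the plan is to use the unit $1\in\Gamma^+$ and the identity of the previous step to show that the witnesses of $I\neq 0$ can be transported to a commutator acting non-trivially on $n$, so that $n\in I$. The hard part is precisely this propagation: it requires a careful Peirce-type analysis with respect to idempotents of the commutative associative algebra $\Gamma^+$, controlling the range of the odd-odd product inside $\Gamma^+$ and ruling out the possibility that the \emph{associative} submodule $I^\perp$ and the \emph{anomalous} submodule $I$ decompose $\Gamma^-$ non-trivially. This structural step is the content of the Zhevlakov--Shestakov argument cited in the text, and any self-contained proof essentially has to reproduce it.
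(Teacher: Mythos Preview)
The paper does not prove this lemma at all: it is quoted verbatim from Zhelyabin--Shestakov \cite{ZS} (as ``Lemma~2'' there) and used as a black box in the proof of Lemma~\ref{lemmamio}. So there is no in-paper argument to compare your sketch against.

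That said, your sketch has a genuine gap already at the reformulation stage. You assert that the ``polarized graded Jordan module axiom'' gives $L_aL_b+L_bL_a=2L_{ab}$ on $\Gamma^-$ for $a,b\in\Gamma^+$. This is false in general: in any Jordan algebra one has $L_aL_b+L_bL_a-L_{ab}=\tfrac{1}{2}U_{a,b}$, and $U_{a,b}$ does not vanish merely because $\Gamma^+$ is associative (associativity of $\Gamma^+$ controls $[a,b,c]$ for even $a,b,c$, not the action on $\Gamma^-$). Consequently the clean identity $[m,a,b]=-\tfrac{1}{2}[L_a,L_b]\,m$ is unjustified, and with it the identification $I=[\mathcal L,\mathcal L]\cdot\Gamma^-$. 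What \emph{does} follow from associativity of $\Gamma^+$ and the Jordan identity $[[L_a,L_b],L_c]+L_{[a,b,c]}=0$ is that $[L_a,L_b]$ commutes with every $L_c$ on $\Gamma^-$; this already yields your stability claim $L_cI\subseteq I$, but by a different route than the one you wrote.

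Finally, your own last paragraph concedes that the decisive step---ruling out a nontrivial $\Gamma^+$-module splitting $\Gamma^-=I\oplus I^\perp$ with $0\neq I\neq\Gamma^-$---``is the content of the Zhevlakov--Shestakov argument.'' So even modulo the issue above, the proposal is not a proof but an outline that defers the crux to the same reference the paper cites.
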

\begin{lemma}\label{lZS2} 
Let $\Gamma$ be as above and such that $[\Gamma^-, \Gamma^+, \Gamma^+]=0,$ then either one of the two equalities
\begin{equation}
[\Gamma^+, \Gamma^-, \Gamma^-]=0
\end{equation}
or
\begin{equation}
[\Gamma^+, \Gamma^-, \Gamma^-]=\Gamma^+
\end{equation}
holds.
\end{lemma}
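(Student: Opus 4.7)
\bigskip
\noindent\textbf{Proof proposal.}

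Set $N := [\Gamma^+,\Gamma^-,\Gamma^-]$; the parity computation ``even$\cdot$odd$\cdot$odd is even'' shows that $N$ is a linear subspace of $\Gamma^+$. The goal is the dichotomy $N=0$ or $N=\Gamma^+$, and my strategy is to show that $N$ is a two-sided associative ideal of $\Gamma^+$ that, once nonzero, is forced to contain the unit.

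The plan is first to translate the standing hypothesis $[\Gamma^-,\Gamma^+,\Gamma^+]=0$ into operator language: for all $u\in\Gamma^-$ and $a,b\in\Gamma^+$ one has $(ua)b=u(ab)$, equivalently, after invoking graded commutativity, the left multiplications $L_a$, $L_b$ on $\Gamma^-$ commute and can be replaced by $L_{ab}$. This will allow me to move arbitrary elements of $\Gamma^+$ freely across odd factors whenever the odd factor is ``alone'' between them.

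Next I would feed the graded Jordan identity
\[
(-1)^{|x||z|}[L_{xy},L_z]_{gr}+(-1)^{|z||y|}[L_{zx},L_y]_{gr}+(-1)^{|y||x|}[L_{yz},L_x]_{gr}=0
\]
with the parity assignment $x\in\Gamma^+$, $y\in\Gamma^-$, $z\in\Gamma^-$, and evaluate it on a test element $w\in\Gamma^+$. Expanding the resulting graded commutators in terms of associators and using the simplifications produced by $[\Gamma^-,\Gamma^+,\Gamma^+]=0$, I expect an identity of the form
\[
a\cdot[a',y,z]\equiv[a\cdot a',y,z]\pmod{N}\qquad\text{and more strongly}\quad a\cdot[a',y,z]\in N,
\]
for all $a,a'\in\Gamma^+$ and $y,z\in\Gamma^-$. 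Together with graded commutativity on $\Gamma^+$, this shows that $N$ is stable under multiplication by $\Gamma^+$, hence a two-sided ideal in the associative algebra $\Gamma^+$.

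Finally, to promote ``ideal'' to the claimed dichotomy, I would specialize $a'=1$ or run the identity once more with the unit in one of the Jordan slots. Unitality forces the ``boundary'' of $N$ in a very rigid way: if some $[a,y,z]\neq 0$, then replacing $a$ by products $a=a_1\cdots a_k$ and repeatedly applying the closure identity above should exhibit any prescribed element of $\Gamma^+$ as a linear combination of associators, i.e.\ $1\in N$ and then $N=\Gamma^+$. The main obstacle I anticipate is bookkeeping the signs and associator rearrangements in the expansion of the graded Jordan identity, and in particular verifying that the ``error terms'' that would spoil closure of $N$ all lie in $[\Gamma^-,\Gamma^+,\Gamma^+]=0$ by hypothesis; without that hypothesis the argument would collapse, which is exactly why Lemma \ref{lZS2} is stated under the conclusion of Lemma \ref{lZS1}.
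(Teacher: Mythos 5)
First, note that the paper does not actually prove this lemma: it is imported verbatim from Zhelyabin--Shestakov \cite{ZS} (Lemmas 2 and 3 there), so there is no internal proof to compare yours against; your attempt has to be judged on its own merits. Judged that way, it has a genuine gap at its final and decisive step. Even if you succeed in showing that the linear span $N$ of $[\Gamma^+,\Gamma^-,\Gamma^-]$ is a two-sided ideal of the unital associative algebra $\Gamma^+$, that alone cannot yield the dichotomy $N=0$ or $N=\Gamma^+$: a unital associative algebra generally has many proper nonzero ideals (already $\mathbb{R}\oplus\mathbb{R}$ with componentwise product does). Your mechanism for promoting ``nonzero ideal'' to ``contains $1$'' --- writing $a=a_1\cdots a_k$ and iterating the closure identity --- is a non sequitur: multiplying elements of an ideal by elements of the algebra keeps you inside the ideal and can never produce the unit unless the ideal already contains an invertible element. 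The dichotomy here is a simplicity phenomenon: in \cite{ZS} the superalgebras under consideration are simple, the relevant associator space is shown to sit inside (or generate) a graded ideal of the \emph{whole} superalgebra $\Gamma$, not merely of $\Gamma^+$, and simplicity then forces that ideal to be $0$ or everything, after which one reads off the even component. Some global input of this kind is indispensable; an argument confined to the ideal structure of $\Gamma^+$ cannot close, and indeed as transcribed in this paper (with the simplicity hypothesis dropped) the statement should be treated with caution.

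A secondary but real issue is that the middle of your argument is explicitly conjectural: the closure relation $a\cdot[a',y,z]\in N$ is introduced with ``I expect an identity of the form\dots'' rather than derived. It is plausible that it follows from the graded Jordan identity combined with $[\Gamma^-,\Gamma^+,\Gamma^+]=0$, but since you neither expand the graded commutators nor track the error terms you promise to absorb into the hypothesis, even the ``$N$ is an ideal'' half of your plan is at present a program rather than a proof.
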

\begin{proof}[Proof of theorem \eqref{lemmamio}]
Let $\xi=\sum_{i} a_i \otimes b_i$ and $\eta= x \otimes y$ be elements in  $J^8_3 \otimes\Gamma$ we have to find whenever $\left[\xi^2,\eta,\xi\right]=0,$ that is
\begin{equation} \label{tens}
\begin{split}
&\left[\left(\sum_{i} a_i \otimes b_i\right)^2,x \otimes y,\sum_{j} a_j \otimes b_j\right]=
\\&\left[\sum_{i,j<i} a_i a_j \otimes b_i b_j + (-1)^{\mid b_i\mid \mid b_j\mid}a_i a_j \otimes b_i b_j ,x \otimes y,\sum_{k} a_k \otimes b_k\right]+
\\&+\left[\sum_{i} a_i^2\otimes b_i^2 ,x \otimes y,\sum_{k\neq i} a_k \otimes b_k\right]=0
\end{split}
\end{equation}
$\Gamma=\Gamma^+ \oplus \Gamma^-$ is a Jordan superalgebra and in particular $\Gamma^+$ is a graded subalgebra of $\Gamma$ and one knows (\cite{Wu}) that the algebra $J^8_3\otimes \Gamma^+$ is a Jordan graded algebra if and only if $\Gamma^+$ is associative. We must then assume $\Gamma^+$ associative. In expression \eqref{tens} let us take
\begin{equation}
\begin{split}
&\xi= a_{-1}\otimes 1 + a_0 \otimes e + \sum_{i} a_i \otimes o_i,
\\&\xi^2=  a^2_{-1}\otimes 1 + a^2_0 \otimes e^2+a_{-1}\otimes e + 
\\&\;+2\sum_{i}a_{-1} a_i \otimes o_i+ 2\sum_{i}a_{o} a_i \otimes \tilde{o}_i,
\\& \eta= x_0\otimes y_e+ x_1\otimes y_o
\end{split}
\end{equation}
where $a_i$ and $ x_i \in J^8_3, $ $e$ and $ y_e \in \Gamma^+,$ $o_i$ and $ y_o \in \Gamma^-$ and finally we set $\tilde{o}_i=eo_i\in \Gamma^- .$ 
Then one has
\begin{equation}
\begin{split}
&\left[\xi^2,\eta,\xi\right]=\left[a^2_0\otimes e^2,x \otimes y, a_j \otimes o_j\right]+ \left[ a_{-1}a_0\otimes e,x \otimes y, a_0 \otimes e+a_j \otimes o_j\right]+
\\&+\left[ a_{-1}a_i\otimes o_i,x \otimes y, a_0 \otimes e\right]+ \left[ a_{0}a_i\otimes \tilde{o}_i,x \otimes y, a_0 \otimes e\right] +
\\&+ \left[ a_{0}a_i\otimes \tilde{o}_i,x \otimes y, a_0a_j \otimes \tilde{o}_j\right]=0
\end{split}
\end{equation}
for all $a_i \in J^8_3.$ We can choose elements $a_i$'s and $x$ in $J^8_3$ in such a way that this condition is equivalent to
\begin{equation}
\begin{split}
&\left[e^2, y_e, o_j\right]+\left[e^2, y_o, o_j\right]+ \left[ e,y_o, e\right]+\left[e,y_e,o_j\right]+\left[e,y_o,o_j\right]+\left[o_i, y_e,  e\right]+
\\&+\left[o_i, y_o,  e\right]+ \left[ \tilde{o}_i, y_e, e\right] + \left[ \tilde{o}_i, y_o, e\right]  + \left[ \tilde{o}_i, y_e,\tilde{o}_j\right]+\left[ \tilde{o}_i, y_o,\tilde{o}_j\right]=0
\end{split}
\end{equation}
and varying elements in $\Gamma$ we see that condition above implies 
\begin{equation}
\left[\Gamma^-,\Gamma^+,\Gamma^+\right]+ \left[\Gamma^-,\Gamma^-,\Gamma^+\right]+\left[\Gamma^-,\Gamma^-,\Gamma^-\right]=0
\end{equation}
then, combining lemma $\eqref{lZS1}$ with lemma $\eqref{lZS2},$ we see that the equality above can hold only if all the summands above are identically zero, hence $\Gamma= \Gamma^+ \oplus \Gamma^-$ must be an associative superalgebra. 
\end{proof}
Now the proof of theorem \eqref{dce} is the same as in proposition $4$ of \cite{MDV1}, as we shall recall for sake of completeness.
\begin{proof}[Proof of theorem \eqref{dce}] For all $n \in \mathbb{N},$ $\Omega^n$ is a Jordan module over $J^8_3$ and from general theory of $J^8_3$ modules we know that every module over $J^8_3$ is a free module, hence we have
\begin{equation}
\Omega^n= J^8_3 \otimes \Gamma^n
\end{equation}  
where $\Gamma^n$ is a vector space. Any differential graded Jordan superalgebra over $J^8_3$ is then written as
\begin{equation*}
\Omega=\oplus_{n \in \mathbb{N}} J^8_3 \otimes \Gamma^n= J^8_3 \otimes \Gamma
\end{equation*} 
where $\Gamma= \oplus_{n \in \mathbb{N}} \Gamma^n$ is a Jordan superalgebra. 
 
 Consider the $J^8_3$-module $\Omega^1= J^8_3 \otimes \Gamma^1,$ and let $\{e^\alpha\}\subset \Gamma^1$ be a basis of $\Gamma^1.$ Let $\{\partial_k\}$ be a basis of $Der\left(J^8_3\right)$ with dual basis $\{\theta_k\}$ such that $\theta_k\left(\partial_j\right)=\delta_{kj}.$ We have
\begin{equation}
dx=\partial_k x\otimes c_\alpha^k e^\alpha
\end{equation}
for all $x \in J$ and for some real constants $c_\alpha^k$'s. Define the linear map $\tilde{\phi}$ from $\Omega^1_{Der}$ into $\Omega^1$ by
\begin{equation}
\tilde{\phi}\left(x \otimes \theta^k\right)= x \otimes c_\alpha^k e^\alpha.
\end{equation}
and extend it as homomorphism of superalgebras. We have $\tilde{\phi} \circ d_{Der}= d \circ \tilde{\phi},$ and uniqueness of $\tilde{\phi}$ follows from $d^2=0$ and the Leibniz rule.
\end{proof}
It is important to remark that this statement holds true only for the exceptional Jordan algebra and it is a direct consequence of the fact that the only irreducible module over $J^8_3$ is $J^8_3$ itself. 
\section{Connections and curvature for Jordan modules}
There are two equivalent definitions of derivation-based connections for modules of Jordan algebras and correspondingly two definitions of curvature. 
\begin{mydef} \label{pc}
Let $J$ be a Jordan algebra, a \textit{derivation-based connection} on a module $M$ over $J$ is a linear mapping $\nabla $ from $ Der(J)$ into the space of linear endomorphisms of the module $End(M),$ $\nabla: X \mapsto \nabla_X$ such that
\begin{equation}\label{prc}
\nabla_X (xm)=X(x)m+x\nabla_X m
\end{equation}
and
\begin{equation}\label{src}
\nabla_{zX} (m)=z\nabla_{X} (m)
\end{equation}
for any $x \in J,$ $m \in M$ and $z \in Z(J).$ 
\end{mydef}
From the first property it follows that if $\nabla$ and $\nabla'$ are two connections on the Jordan module $M$, then $\nabla_X-\nabla_{X}'$ is a $J$-module endomorphism.
\begin{mydef}
Let $\nabla$ be a derivation-based connection on a Jordan module $M.$ The \textit{curvature} of $\nabla$ is defined as
\begin{equation}
R_{X,Y}=[\nabla_X, \nabla_Y]-\nabla_{[X,Y]}
\end{equation}
for all $X,Y\in Der(J).$
\end{mydef}
It follows that $R_{X,Y}$ is a $J$-module endomorphism.
A connection will be called flat if its curvature is identically zero that is
\begin{equation}
R_{X,Y}(m)=0  
\end{equation}
for all $X, Y \in Der(J)$ and $m \in M.$
\begin{rema} 
In view of applications to particle physics, and in particular to Yang-Mills models, we are interested in classifying flat connections for Jordan modules. In fact, according to a standard heuristic argument ( see e.g. \cite{MDV2},\cite{DVKM}), any flat connection corresponds to a different ground state of the theory and the specification of the latter leads to different physical situations.
\end{rema} 
The second definition of derivation-based connections is more suitable to be generalized to connections not based on derivations.
\\ Let $J$ be a Jordan algebra, let $M$ be a module over $J$ and denote as $\Omega^n_{Der}(M)$ the $J$-module of all $n$-$Z(J)$-linear antisymmetric mapping of $Der(J)$ into $M,$ then $\Omega_{Der}(M)=\oplus\Omega^n_{Der}(M)$ is a module over $\Omega_{Der}(J)$ in the following way: for $\omega \in \Omega^n_{Der}(J)$ and $\Phi \in \Omega^l_{Der}(M),$ the action of $\omega$ on $\Phi$ is given by
\[
\left(\omega\Phi\right)\left(X_1,...,X_{n+l}\right)=
\frac{1}{(n+l)!}\sum_i (-1)^{\vert i\vert} \omega\left(X_{i_1},..., X_{i_n}\right)\Phi(X_{i_{n+1}},...,X_{i_{n+l}})
\]
where $i$ denotes a permutation of $\left(1,...,{n+l}\right)$ and $\mid i \mid$ denotes the parity of the permutation $i$.
 \begin{mydef}\label{sc}
Let $J$ be a Jordan algebra, let $M$ be a module over $J.$ A \textit{derivation-based connection} on $M$ is a linear endomorphism $\nabla$ of $\Omega_{Der}(M)$ such that
\begin{equation}\label{sdc}
\nabla(\Phi)\in \Omega^{l+1}_{Der}(M)
\end{equation}
and
\begin{equation} \label{ssc}
\nabla\left(\omega\phi\right)=d(\omega)\Phi+(-1)^n\omega \nabla \Phi .
\end{equation} 
for all $\omega \in \Omega^n_{Der}(J)$ and $\Phi \in \Omega^l_{Der}(M).$
\end{mydef} 
From \eqref{ssc} we see that if $\nabla$ and $\nabla'$ are two different connections, then their difference is an endomorphism of $\Omega_{Der}(M)$ as module over  $\Omega_{Der}(J).$
In this case the \textit{curvature} of a connection is defined as $R=\nabla^2.$
Definitions \eqref{pc} and \eqref{sc} are equivalent, in fact if $\nabla$ is a connection as in the second definition, one defines a map from $Der(J)$ into $End(M)$ by setting
\begin{equation}
\nabla_X(m)=(\nabla(m))(X)
\end{equation}
and the map $X \mapsto \nabla_X$ is a connection in the sense of \eqref{pc}. On the other hand, if $\nabla: X \mapsto \nabla_X$ is a connection according to the first definition, one sets
\begin{equation}
\begin{split}
\nabla(\Phi)\left(X_0,...,X_n\right)=&\sum _{0 \leq k \leq n}(-1)^k \nabla_{X_p} \left(\Phi\left(X_0,...,\widehat{X_{k}},... X_n\right)\right)+
\\& +\sum _{0 \leq r <s \leq n}(-1)^{r+s}  \Phi\left([X_r,X_s],X_0,...,\widehat{X_{r}},...,\widehat{X_{s}},... X_n\right) 
\end{split}
\end{equation}
for all $\Phi \in \Omega^n_{Der}(M)$ and $X_p \in Der(J)$ and $\nabla$ is now a connection according to definition \eqref{sc}.
\\In the following examples the term ``connection" will stand for derivation-based connection.
\begin{exa}
Let $J$ be a finite-dimensional and unital Jordan algebra, let $M=J \otimes E$ be a free $J$-module. On $M$ we have a base connection $\nabla^0=d\otimes Id_E:J\otimes E \rightarrow \Omega_{Der}^1\otimes E.$ As map from $Der(J)$ into $End(M),$ $\nabla^0$ is the lift of the differential on $J,$ that is 
\begin{equation}
\begin{split}
\nabla^0_X \left(x\otimes e\right)=\left(dx\right)(X) \otimes e 
\end{split}
\end{equation}
for any $ X \in Der(J)$ and $ \; x\otimes e \in M.$ 
It is easy to check that $\nabla^0$ respects properties \eqref{prc} and \eqref{src}. Moreover, this connection is gauge invariant whenever the center of $J$ is trivial.
$\diamond$
\begin{prop}\label{conn}
Let $J$ be a finite-dimensional Jordan algebra, let $M=J\otimes E$ be a free module over $J,$ where $E$ is a real vector space. Then any connection on $M$ is of the form
\begin{equation}
\nabla= \nabla^0 + \mathcal{A}
\end{equation}
where $\mathcal{A}$ is a linear map $\mathcal{A}: Der(J)\rightarrow Z(J) \otimes End(E)$ and  
\begin{equation}
\mathcal{A}(X)\left(x \otimes e\right)= x\otimes A(X)e 
\end{equation}
for all $ X\in Der(J)$ and $x \in J.$
\begin{proof}
From the definition of connection, it has to be
\begin{equation}
\nabla- \nabla^0=A \in End_J(M) 
\end{equation}
and from theorem \eqref{main}, it follows $A(X) \in Z(J) \otimes End(E).$
\end{proof}
\end{prop}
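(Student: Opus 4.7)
The plan is to show that the space of derivation-based connections on $M$ is an affine space modeled on linear maps $Der(J) \to End_J(M)$, and then to apply the previously-established classification of $J$-module homomorphisms between free Jordan modules to identify $End_J(M)$ with $Z(J) \otimes End(E)$.

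First, I would take any connection $\nabla$ and set $\mathcal{A}(X) := \nabla_X - \nabla^0_X$ for each $X \in Der(J)$, where $\nabla^0$ is the base connection from the preceding example (which visibly satisfies both \eqref{prc} and \eqref{src}). Applying the Leibniz property \eqref{prc} to both $\nabla$ and $\nabla^0$ on $xm$ gives
\begin{equation*}
\mathcal{A}(X)(xm) = \bigl(X(x)m + x\nabla_X m\bigr) - \bigl(X(x)m + x\nabla^0_X m\bigr) = x\,\mathcal{A}(X)(m),
\end{equation*}
so the cross-terms cancel and $\mathcal{A}(X) \in End_J(M)$ for every $X$. Linearity of the assignment $X \mapsto \mathcal{A}(X)$ from $Der(J)$ to $End_J(M)$ is immediate from linearity of both connections, while $Z(J)$-linearity $\mathcal{A}(zX) = z\mathcal{A}(X)$ follows from applying \eqref{src} to both $\nabla$ and $\nabla^0$.

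Second, I would invoke Theorem \ref{main} (together with its generalization in the lemma that follows, which is needed whenever $J$ is not simple) to identify every $J$-module endomorphism of $M = J \otimes E$ with an element of $Z(J) \otimes End(E)$. Writing $\mathcal{A}(X) = \sum_k \alpha_k \otimes B_k$ with $\alpha_k \in Z(J)$ and $B_k \in End(E)$, its action on $x \otimes e$ produces $\sum_k \alpha_k x \otimes B_k e$, which is exactly the expression $x \otimes A(X)e$ in the notation of the statement. Combining with the first step gives the claimed decomposition $\nabla = \nabla^0 + \mathcal{A}$.

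The only real obstacle has been absorbed into Theorem \ref{main} and its non-simple refinement; once that classification is in hand the rest is a formal manipulation of the connection axioms. A minor subtlety worth flagging is that one must use the generalized version for non-simple $J$, since the statement only assumes $J$ finite-dimensional and unital.
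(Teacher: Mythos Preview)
Your proposal is correct and follows essentially the same two-step argument as the paper: the difference $\nabla-\nabla^0$ is a $J$-module endomorphism for each $X$, and then Theorem~\ref{main} identifies $End_J(M)$ with $Z(J)\otimes End(E)$. Your version is simply more explicit (you write out the cancellation in the Leibniz rule and the $Z(J)$-linearity in $X$), and you correctly flag that for non-simple $J$ one must appeal to the lemma following Theorem~\ref{main} rather than the theorem itself --- a point the paper's proof elides.
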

For what concerns flat connections, the following result, very similar to its counterpart in the context of Lie algebras, holds.
\begin{prop}
Let $M=J\otimes E$ be a free module over a simple Jordan algebra $J,$ then 
flat connections on $M$ are in one to one correspondence with Lie algebra homomorphisms $\mathcal{A}:Der(J)\rightarrow End(E).$ That is, for a basis $\{X_\mu\}\subset Der(J)$ with structure constants $c^\tau_{\mu \nu}$ one has
\begin{equation}\label{Liehomo}
\left[\mathcal{A}(X_\mu),\mathcal{A}(X_\nu)\right]=c^\tau_{\mu \nu} \mathcal{A}(X_\tau).
\end{equation}
where $\left[X_\mu, X_\nu\right]=c^\tau_{\mu \nu} X_\tau.$ 
\begin{proof}
By direct computation one can check that if a given connection $\nabla=\nabla^0+\mathcal{A}$ is flat then \eqref{Liehomo} must hold. 
\\On the converse, if $\mathcal{A}:Der(J)\rightarrow End(E)$ is such that \eqref{Liehomo} holds on a basis  $\{X_\mu\}\subset Der(J),$ then $\nabla=\nabla^0+\mathcal{A}$ is a flat connection on $M.$
\end{proof}
\end{prop}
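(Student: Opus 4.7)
The plan is to apply Proposition \ref{conn} to parametrize every connection on $M = J \otimes E$ as $\nabla = \nabla^0 + \mathcal{A}$, and then reduce flatness to a relation on $\mathcal{A}$ by a direct curvature computation. The key simplification is that, since $J$ is simple, we have $Z(J) = \mathbb{R} \cdot 1$, so $\mathcal{A}$ takes values in $\mathbb{R} \otimes End(E) \cong End(E)$, and the action on $M$ reads $\mathcal{A}(X)(x \otimes e) = x \otimes A(X)e$ with $A(X) \in End(E)$.

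The first step I would carry out is the observation that the base connection $\nabla^0$ is flat. Indeed, $\nabla^0_X(x \otimes e) = X(x)\otimes e$, so $\nabla^0_X \nabla^0_Y(x\otimes e) = (X\circ Y)(x)\otimes e$, and antisymmetrization immediately gives $R^0_{X,Y} = 0$ because the Lie bracket on $Der(J)$ is the commutator of endomorphisms. Next I would expand, using the Leibniz rule \eqref{prc},
\begin{equation*}
\nabla_X\nabla_Y(x\otimes e) = X(Y(x))\otimes e + Y(x)\otimes A(X)e + X(x)\otimes A(Y)e + x \otimes A(X)A(Y)e,
\end{equation*}
and compare with $\nabla_{[X,Y]}(x\otimes e) = [X,Y](x)\otimes e + x\otimes A([X,Y])e$. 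The mixed terms cancel pairwise after antisymmetrization, the bracket term cancels against the flatness of $\nabla^0$, and what remains is
\begin{equation*}
R_{X,Y}(x\otimes e) = x \otimes \bigl([A(X),A(Y)] - A([X,Y])\bigr)e.
\end{equation*}

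From this identity both implications follow at once. If $\nabla$ is flat, taking $x = 1$ and $e$ arbitrary forces $[A(X),A(Y)] = A([X,Y])$, i.e.\ relation \eqref{Liehomo} on any basis of $Der(J)$. Conversely, if $\mathcal{A}$ is a Lie algebra homomorphism into $End(E)$, the displayed formula shows $R_{X,Y} = 0$ on the generators $x\otimes e$ and hence on all of $M$ by linearity; combined with Proposition \ref{conn}, which exhausts every connection on $M$ in this form, this gives the claimed bijection between flat connections and Lie algebra homomorphisms $\mathcal{A}: Der(J)\rightarrow End(E)$.

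There is no genuine obstacle here: the proof is a routine curvature calculation whose only delicate ingredient is the identification $Z(J)\otimes End(E) \cong End(E)$, which requires the simplicity of $J$ (as used already in the proof of Theorem \ref{main}). The cleanest presentation is probably to record the curvature formula as a single displayed identity and read off both directions from it, rather than treating them as separate computations.
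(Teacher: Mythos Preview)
Your proposal is correct and follows exactly the route the paper indicates: the paper's own proof simply says ``by direct computation'' for both directions, and you have carried out precisely that computation, arriving at the curvature identity $R_{X,Y}(x\otimes e) = x \otimes \bigl([A(X),A(Y)] - A([X,Y])\bigr)e$ from which both implications are read off. Your use of simplicity to identify $Z(J)\otimes End(E)$ with $End(E)$ is the right ingredient, and the flatness of $\nabla^0$ is handled correctly.
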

Summarizing all the derivation-based differential calculus for free modules over Jordan algebras is resumed by the following proposition.
\begin{prop} Let $J$ be a unital Jordan algebra, let $M= J\otimes E$ be a free module over $J$ then
\begin{enumerate}
\item $\nabla^0= d\otimes \mathbb{I}_E:J\otimes E\rightarrow \Omega^1(J) \otimes E $ defines a flat connection on $M$ which is gauge-invariant whenever the center of $J$ is trivial.
\item Any other connection $\nabla$ on $M$ is defined by 
\begin{equation}
\nabla= \nabla^0 + A :J\otimes E\rightarrow \Omega^1(J) \otimes E 
\end{equation}
 where $A$ is a module homomorphism of $J\otimes E$ into $\Omega^1(J) \otimes E.$
\item For a derivation-based connection $\nabla$ the curvature is given by 
\begin{equation}
\nabla^2(X,Y)= R_{X,Y}= X(A(Y))-Y(A(X))+\left[A(X),A(Y)\right]-A([X,Y])
\end{equation} 
for any $X,Y\in Der(J).$
\item If $J$ is a simple Jordan algebra, then $\nabla$ defines a flat connection if and only if the map $A: Der(J)\rightarrow End(E)$ is a Lie algebra homomorphism.
\end{enumerate}
\end{prop}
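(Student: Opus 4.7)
The plan is to observe that each of the four items packages a fact already established in this section, so the proof consists in verifying that the pieces fit together and in carrying out one genuinely new computation, namely the curvature formula of item $(3)$.

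For item $(1)$, I would first recall from the preceding example that $\nabla^{0}=d\otimes \mathbb{I}_{E}$ satisfies the Leibniz condition \eqref{prc} and the $Z(J)$-linearity \eqref{src}, simply because $d_{Der}$ is a derivation $J\to\Omega^{1}_{Der}(J)$ while the second tensor factor is inert. Flatness is then a direct consequence of $d_{Der}^{2}=0$: on a generator $x\otimes e$ the curvature collapses to $(d_{Der}^{2}x)(X,Y)\otimes e=0$. For gauge invariance I would identify a gauge transformation with an element $g\in\mathrm{Aut}_{J}(M)$ and apply Theorem \ref{main} to write $g=\mathrm{id}_{J}\otimes g_{E}$, with entries of $g_{E}$ lying a priori in $Z(J)$; the shift of $\nabla^{0}$ under $g$ is governed by $g^{-1}dg$, which vanishes identically precisely when these entries are real constants, i.e.\ when $Z(J)=\mathbb{R}$.

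For item $(2)$, I would invoke the observation made right after Definition \ref{pc}, that the difference of two connections is a $J$-module endomorphism, together with Proposition \ref{conn}, which forces this endomorphism to have the announced form $\mathcal{A}:Der(J)\to Z(J)\otimes End(E)$ acting by $\mathcal{A}(X)(x\otimes e)=x\otimes A(X)e$. For item $(3)$, I would expand $\nabla_{X}(x\otimes e)=X(x)\otimes e+x\otimes A(X)e$ and iterate to compute $[\nabla_{X},\nabla_{Y}](x\otimes e)-\nabla_{[X,Y]}(x\otimes e)$; the terms involving second derivatives of $x$ recombine into $[X,Y](x)\otimes e$ and cancel against $\nabla_{[X,Y]}$, the cross terms $Y(x)\otimes A(X)e$ and $X(x)\otimes A(Y)e$ cancel pairwise upon antisymmetrisation in $(X,Y)$, and the remainder is exactly $x\otimes\bigl(X(A(Y))-Y(A(X))+[A(X),A(Y)]-A([X,Y])\bigr)e$, as claimed.

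Item $(4)$ follows by combining item $(3)$ with the preceding proposition on flat connections: if $J$ is simple then $Z(J)=\mathbb{R}$, so $A(X)\in End(E)$ is constant over $J$ and the derivation terms $X(A(Y))$, $Y(A(X))$ drop out, reducing flatness to the Lie algebra homomorphism condition \eqref{Liehomo}. The only real obstacle is the bookkeeping in item $(3)$: one must use the bimodule relations \eqref{pmj} carefully so that every reassociation of the form $x(A(X)e)=(xA(X))e$ is legitimate, which is guaranteed by the fact that $A(X)$ acts through the centre and so the relevant associators vanish. Once this is verified the four items fit together without further work.
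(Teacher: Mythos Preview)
Your proposal is correct and matches the paper's intent: the proposition is introduced in the paper as a summary (``Summarizing all the derivation-based differential calculus for free modules over Jordan algebras is resumed by the following proposition'') and carries no separate proof, so your strategy of pointing each item back to the relevant preceding result---the example defining $\nabla^{0}$, the remark after Definition~\ref{pc}, Proposition~\ref{conn}, and the proposition characterising flat connections via \eqref{Liehomo}---is exactly what is implicitly expected. Your explicit curvature computation in item~(3) and the observation that $X(A(Y))$, $Y(A(X))$ vanish in the simple case because $Z(J)=\mathbb{R}$ fill in the only details the paper leaves to the reader.
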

\end{exa}
\begin{exa}
Consider again $A^i_n$ as module over $J^i_n.$ We can provide a base connection for this module. From \eqref{Jacobsonder} we know that for any $X \in Der(J^i_n)$ there exists a finite number of couples of $x_i, y_i \in J^i_n$ such that 
\begin{equation}\label{dam}
X(z)=\sum_i \left(x_i\circ z \right) \circ y_i - x_i\circ \left(j  \circ y_i \right)
\end{equation}
for any $z \in J^i_n$ and
where we have explicitly written $\circ$ to design matrix anticommutator. Let $X_i=\left[x_i,y_i\right],$ where the commutator is taken with respect to the standard row by column product, then the expression above can also be written as:
\begin{equation}
X(z)=\sum_i\left[X_i,z\right]
\end{equation}
for any $z \in J^i_n.$
\\Recall that the commutator of two hermitian matrices is an antihermitian matrix, then a good base connection on the Jordan module $A^i_n$ is given by
\begin{equation}
\nabla_X(a)=\sum_i\left[X_i,a\right]
\end{equation}
for all $a\in A^i_n,$ indeed:
\begin{equation}
\begin{split}
&\nabla_X(z\circ a)= \sum_i\left[X_i,z\circ a\right]=
\\&\left[X_i,z\right]\circ a+\sum_i\left[X_i,a\right]\circ z= X(z)\circ a + z\circ \nabla_X(a)
\end{split}
\end{equation}
for all $z \in J^i_n$ and $a \in A^i_n.$ 
Moreover this base connection is flat, indeed:
\begin{equation}
\begin{split}
&\left(\left[\nabla_x,\nabla_Y\right]-\nabla_{[X,Y]}\right)(a)=\left[X\left[Y,a\right]\right]-\left[Y\left[X,a\right]\right]-\left[\left[X,Y\right],a\right]=
\\&\left[\left[a,Y\right],X\right]+\left[\left[X,a\right],Y\right]+\left[\left[Y,X\right],a\right]=0
\end{split}
\end{equation}
in view of the Jacopi identity in the Lie algebra $M_n(\mathbb{R}). \quad \diamond$
\end{exa}
\begin{rema}
Due to commutativity, for any Jordan algebra $J$ it holds
\begin{equation}
[x,z,y]=-\left[L_x,L_y\right]z
\end{equation} 
for all $x,y,z \in J.$ Hence the commutator $\left[L_x,L_y\right]$ defines an inner derivation for $J.$ In fact, formula \eqref{dam} is a consequence of this in the particular case of special Jordan algebras.
\end{rema}
The example above can be generalized to the case of a module $M$ over any finite-dimensional, semisimple Jordan algebra. In fact, in view of theorem \eqref{Jacobsonder} all the derivations of such algebras are inner.
\begin{prop}
Let $J$ be a finite-dimensional semisimple Jordan algebra so that $\forall X \in Der(J)$ there exist a finite number of couples of elements $x_i,y_i\in J$ such that 
\begin{equation}
X(z)=\sum [x_i,z,y_i]
\end{equation}  
for every $z \in J.$
Then the map 
\begin{equation}
\begin{split} \label{css}
\nabla:& Der(J) \rightarrow End(M)
\\& X\mapsto \nabla_X=\sum [x_i,\cdot,y_i]
\end{split}
\end{equation}
is a connection on M.
\end{prop}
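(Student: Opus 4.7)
The plan is to recast everything through left-multiplication operators $L_a$. Thanks to commutativity of the Jordan product on $J$ and of the $J$-action on $M$, one has the identity $[x,w,y]=-[L_x,L_y](w)$ for $x,y\in J$ and $w$ either in $J$ or in $M$, so the hypothesis $X(z)=\sum_i[x_i,z,y_i]$ becomes the operator equality $X=-\sum_i[L_{x_i},L_{y_i}]$ on $J$, while the proposed $\nabla_X$ is literally the same expression, now read on $M$.

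To verify the Leibniz axiom \eqref{prc}, I would invoke the Jordan module identity $[[L_x,L_y],L_z]+L_{[x,y,z]}=0$ acting on $M$, already present in the list of bimodule axioms recalled earlier in the paper. Applied to $m\in M$, it rearranges to $[x,zm,y]=z\,[x,m,y]+[x,y,z]\,m$; summing over the index $i$ yields
\[
\nabla_X(zm)=z\,\nabla_X(m)+\Bigl(\sum_i[x_i,y_i,z]\Bigr)m.
\]
Running the very same identity with $w\in J$ in place of $m\in M$ and comparing with the derivation property $X(zw)=X(z)w+zX(w)$ forces $\sum_i[x_i,y_i,z]\cdot w=X(z)\cdot w$ for every $w\in J$; evaluation at the unit of $J$ then gives $\sum_i[x_i,y_i,z]=X(z)$, which completes the check of \eqref{prc}.

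For the $Z(J)$-linearity \eqref{src}, fix $z\in Z(J)$. Because $z$ associates with every element of $J$, one has $z\,[x_i,w,y_i]=[zx_i,w,y_i]$ for $w\in J$, so that $(zX)(w)=\sum_i[zx_i,w,y_i]$ is an admissible presentation of the derivation $zX$ and the defining formula gives $\nabla_{zX}(m)=\sum_i[zx_i,m,y_i]$. The target equality $\nabla_{zX}(m)=z\,\nabla_X(m)$ then reduces to the analogue $z\,[x_i,m,y_i]=[zx_i,m,y_i]$ now on $M$, i.e.\ to the vanishing of the mixed associator $(za)n-z(an)$ for all $a\in J$ and $n\in M$.

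This last point is the main technical obstacle: propagating centrality of $z$ inside $J$ to centrality of its action on the bimodule $M$. My preferred approach would be to pass to the split null extension $\tilde J=J\oplus M$ of Definition \ref{dm} and establish $Z(J)\subseteq Z(\tilde J)$: since $M\cdot M=0$ inside $\tilde J$, the only potentially nonzero associators $[z,u,v]_{\tilde J}$ with $z\in Z(J)$ are the two mixed ones $(za)n-z(an)$ and $(zm)b-z(mb)$, and both can be extracted from a suitable specialization of the Jordan identity on $\tilde J$ with one of the arguments set to $z$. With that centrality-transfer lemma in hand the remaining computations close, and $\nabla$ qualifies as a connection in the sense of \eqref{pc}.
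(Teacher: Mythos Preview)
Your argument for the Leibniz rule \eqref{prc} is correct and follows a more computational route than the paper's. The paper works in the split null extension $\tilde J=J\oplus M$ from the outset: since $\tilde J$ is itself a Jordan algebra, the map $\tilde X(z,m)=\sum_i[(x_i,0),(z,m),(y_i,0)]$ is automatically a derivation of $\tilde J$ (being a sum of maps $w\mapsto[a,w,b]$), it restricts to $X$ on $J$, and it preserves $M$; the connection Leibniz rule is then just the derivation Leibniz rule applied to the product $(z,0)(0,m)=(0,zm)$. Your approach instead unpacks the Jordan-module identity $[[L_x,L_y],L_z]+L_{[x,y,z]}=0$ directly and matches the result with the derivation property on $J$---this works, and has the merit of making explicit which bimodule axiom does the job, but it is the same mechanism in disguise: the identity you invoke on $M$ is exactly the linearized Jordan identity in $\tilde J$, specialized to two arguments in $J$ and one in $M$.

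On $Z(J)$-linearity and the centrality transfer $Z(J)\subseteq Z(\tilde J)$, you go further than the paper, which simply asserts that $\nabla$ is $Z(J)$-linear. Your plan to establish $(za)m=z(am)$ from a specialization of the Jordan identity in $\tilde J$ is sound (linearize the third bimodule relation with $y=z$ and use that all associators $[\,\cdot\,,\,\cdot\,,z]$ vanish in $J$); once this is done, your reduction of \eqref{src} to $[zx_i,m,y_i]=z[x_i,m,y_i]$ closes. Note, however, that your argument silently assumes that $\nabla_{zX}$ computed from the presentation $zX=\sum_i[zx_i,\cdot,y_i]$ agrees with the ``official'' $\nabla_{zX}$---this is the well-definedness issue the paper itself flags only in the remark following the proposition (uniqueness of the extension of inner derivations to $\tilde J$). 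Neither proof resolves this point; your write-up would be strengthened by stating the hypothesis explicitly.
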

\begin{proof}
Let $X= \sum [x_i,\cdot,y_i] \in Der(J),$ it extends to a derivation  $\tilde{X}$ on the split null extension $J\oplus M$ given by
\begin{equation}
\tilde{X}(z,m)= \sum [(x_i,0),(z,m),(y_i,0)]
\end{equation}
for all $(z,m) \in J\oplus M.$
\\If we identify $M$ with elements of the form $(0,m)$ in $J\oplus M,$ we see that $\tilde{X}$ restricts to a linear endomorphism on $M.$ Then $\nabla$ is a $Z(J)$ linear map from $Der(J)$ into $End(M)$ and from Leibniz rule applied to $\tilde{X} \in Der (J \oplus M)$ we have
$\nabla_X(zm)= X(z)m + z\nabla_Xm.$ 
\end{proof}
 \begin{rema} Connection \eqref{css} can be defined for every Jordan module over a Jordan algebra for which all derivations are inner in the sense of theorem \eqref{Jacobsonder} and such that  the extension of derivations of the algebra to derivations on the split null extension is unique. The set of Jordan algebras for which all derivations are inner contains all finite-dimensional semi-simple Jordan algebra over a field of characteristic zero but it is in fact much wider, for example from theorem $2$ of \cite{Harris} we see that this request holds true for finite-dimensional and separable Jordan algebras on any field of characteristic different from $2.$
 \end{rema}
More generally we can give the following definition for a connection.
\begin{mydef} 
Let $\Omega=\oplus_{\mathbb{N}} \Omega^n$ be a differential graded Jordan algebra and let $\Gamma=\oplus_{\mathbb{N}} \Gamma^n$ be a graded Jordan module over $\Omega$, a \textit{connection} on $\Gamma$ is a linear endomorphism $\nabla:\Gamma\rightarrow \Gamma$ such that 
\begin{eqnarray}
&(\nabla \Phi)\in \Gamma^{l+1}
\\ &\nabla (\omega \Phi)= d(\omega) \Phi + (-1)^n \omega \nabla(\Phi)
\end{eqnarray}
for all $\omega\in \Omega^n$ and $\Phi \in \Gamma^l.$
\end{mydef}
In particular when $\Omega^0=J$ and $\Gamma^0=M$ one obtains the definition of $\Omega$-connection over the $J$-module $M.$

\end{document}